\documentclass[11pt,letterpaper,reqno]{amsart}
\usepackage[includehead,includefoot,margin=24mm]{geometry}
\usepackage{amsfonts}
\usepackage{amsmath}
\usepackage{amssymb}
\usepackage{amsthm}
\usepackage{times}
\usepackage{color}
\usepackage{enumerate}
\usepackage[T1]{fontenc}
\usepackage[utf8]{inputenc}
\usepackage{graphicx}
\usepackage{nicefrac}
\usepackage{xcolor}
\usepackage[all]{xy}
\usepackage{comment}

\usepackage{float}
\usepackage[normalem]{ulem}
\usepackage{pgfplots}
\pgfplotsset{compat=1.15}
\usepackage{mathrsfs}
\usetikzlibrary{arrows}

\usepackage{thmtools}
\usepackage{hyperref}
\hypersetup{
    colorlinks=true,
    linkcolor=blue,
    citecolor=blue,
    filecolor=blue,
    urlcolor=blue
}
\usepackage[nameinlink,capitalize,noabbrev]{cleveref}

\declaretheorem[name=Theorem,numberwithin=section]{theorem}
\declaretheorem[name=Theorem,numbered=no]{theorem*}
\declaretheorem[sibling=theorem,name=Lemma]{lemma}
\declaretheorem[sibling=theorem,name=Proposition]{proposition}

\declaretheorem[sibling=theorem,name=Example,style=definition]{example}
\declaretheorem[sibling=theorem,name=Remark,style=remark]{remark}

\newcommand{\GL}{{\sf{GL}}\,}

\newcommand{\XX}{\mathbf{x}}

\newcommand{\KK}[0]{\ensuremath{\mathbf{k}}}

\newcommand{\CC}[0]{\ensuremath{\mathbb{C}}}
\newcommand{\ZZ}[0]{\ensuremath{\mathbb{Z}}}

\newcommand{\GM}[0]{\ensuremath{\mathbb{G}_{\mathrm{m}}}}

\newcommand{\supp}[0]{\ensuremath{\operatorname{supp}}}

\newcommand{\Aut}[0]{\ensuremath{\operatorname{Aut}}}

\newcommand{\homo}[0]{\ensuremath{\operatorname{Hom}}}

\newcommand{\Der}[0]{\ensuremath{\operatorname{Der}}}

\makeatletter
\newcommand*\bigcdot{\mathpalette\bigcdot@{.5}}
\newcommand*\bigcdot@[2]{\mathbin{\vcenter{\hbox{\scalebox{#2}{$\m@th#1\bullet$}}}}}
\makeatother

\begin{document}

	%\setlength{\baselineskip}{0.54cm}        % Previous 0.56
	%
	%%%%%%%%%%%%%%%%%%%%%%%%%%%%%%%%%%%%%%%%%%%%%%%%%%%%%%%%%%%%%%%%%%
	%
\title[Christophersen's problem for monomial algebras]{Christophersen's problem for monomial algebras}
	
\date{\today}

\thanks{{\it 2020 Mathematics Subject 
  Classification}: 13F55, 13N15, 13E10, 14L30.\\
 \mbox{\hspace{11pt}}{\it Key words}: Christophersen's problem, monomial algebras, Demazure roots, automorphism groups.\\
\mbox{\hspace{11pt}} The first author was partially supported by ANID via Proyecto Fondecyt Iniciaci\'on N\textsuperscript{o}11260788. The second author was partially supported by ANID via Proyecto Fondecyt Regular N\textsuperscript{o}1240101 and Proyecto Fondecyt Exploraci\'on N\textsuperscript{o}13250049.}

\author{Roberto D\'iaz}
\address{Departamento de Matemáticas, Facultad de Ciencias, Universidad de La Serena, Av. Juan Cisternas 1200, La Serena, Chile.
}%
\email{roberto.diazv1@userena.cl}

\author{Alvaro Liendo}
\address{Instituto de Matem\'atica y F\'isica, Universidad de Talca,
 Casilla 721, Talca, Chile.}%
\email{aliendo@utalca.cl}

\begin{abstract}
Christophersen's problem predicts that the connected component of the automorphism group of a finite-dimensional local algebra $A$ of dimension $\ell$ over an algebraically closed field of characteristic zero has dimension at least $\ell-1$, with equality if and only if $A$ is isomorphic to $\KK[t]/(t^{\ell})$. We settle this problem in the class of monomial algebras. Using the combinatorial structure of the irredundant irreducible decomposition of a monomial ideal, we prove the predicted inequality and characterize the equality case. We further obtain stronger lower bounds depending on whether the defining monomial ideal is reducible or irreducible, and we determine all monomial algebras attaining each of these bounds.
\end{abstract}	

\maketitle

\section*{Introduction}

Let $\KK$ be an algebraically closed field of characteristic zero, and let $A$ be a finite-dimensional local $\KK$-algebra of dimension $\ell$, with maximal ideal $\mathfrak{m}$. The automorphism group $\Aut_\KK(A)$ is naturally a linear algebraic group, whose Lie algebra is the algebra of derivations $\Der(A)$; see~\cite[II,~\textsection 4,~2.3]{DG70}. The structure of $\Aut^0_\KK(A)$ and of $\Der(A)$ encodes essential infinitesimal information about $A$, and has been studied from several perspectives.

The dimension of $\Aut^0_\KK(A)$ has a concrete geometric meaning in the deformation theory of singularities, where finite-dimensional local algebras arise as \emph{fat points}. In their study of the versal deformations of cyclic quotient singularities and of the obstruction spaces of rational surface singularities, Christophersen~\cite{Chr91} and Behnke and Christophersen~\cite{BeCh91} systematically related the deformation theory of a singularity to that of its generic hypersurface sections, descending from rational surface singularities through curve singularities to fat points. In this setting, Wahl's conjecture on the dimension of smoothing components, proven by Greuel and Looijenga~\cite{GL85}, together with an observation of Mond and van Straten recorded in~\cite[Section~6.2]{BeCh91}, shows that over $\CC$ each smoothing component of $\CC\{x_1,\ldots,x_r\}/\mathfrak{m}^2$ has dimension $r^2 = \dim \Der(\CC\{x_1,\ldots,x_r\}/\mathfrak{m}^2)$. The same argument gives that \emph{every} smoothing component of the versal base space of a smoothable fat point with local algebra $A$ has dimension exactly $\dim \Der(A) = \dim \Aut^0_\CC(A)$. From an algebraic viewpoint, qualitative properties of $\Aut^0_\KK(A)$ have been studied extensively, most notably its solvability, starting from Yau's theorem on derivations of isolated hypersurface singularities~\cite{Yau91} and continuing with the work of Schulze~\cite{Sch10} and Perepechko~\cite{Per14}.

A natural question in this direction is to determine the minimal possible dimension of $\Aut^0_\KK(A)$. The truncated polynomial ring $\KK[t]/(t^{\ell})$ achieves $\dim \Aut^0_\KK(\KK[t]/(t^{\ell})) = \ell - 1$; it is the algebra of the \emph{curvilinear} fat point of length $\ell$, whose versal deformation is the smooth $(\ell-1)$-parameter unfolding $t^{\ell} + a_{\ell-2}t^{\ell-2} + \cdots + a_0$ of $t^{\ell}$. The following problem, posed by Christophersen in seminar talks in 2016~\cite{Chr16} and recently taken up in~\cite{Sta25} and~\cite[Remark~4.27]{DLMR24}, predicts that this is the extremal case: if $A$ is a finite-dimensional local $\KK$-algebra of dimension $\ell$, then $\dim \Aut^0_\KK(A) \geq \ell - 1$, with equality if and only if $A \cong \KK[t]/(t^{\ell})$. Through the dictionary above, the problem predicts that smoothing a fat point of length $\ell$ requires at least $\ell-1$ parameters, and that the curvilinear point is the unique minimizer. Christophersen's original motivation was that this bound, together with its equality case, would imply the non-existence of rigid reduced curve singularities~\cite{Chr16}, a long-standing open problem; see~\cite[Conjecture~7.3.3]{Gre20}. This problem remains open in general. The main result of this paper settles Christophersen's problem in the class of monomial algebras, that is, quotients $A = \KK[\XX]/I$ where $I \subset \KK[\XX] = \KK[x_1,\ldots,x_n]$ is a monomial ideal. Throughout, we assume that $I$ has no linear generators, i.e., $x_i \notin I$ for every $1 \leq i \leq n$. This is not a restriction: if $x_i \in I$, then $A$ is isomorphic to a monomial algebra in the remaining $n-1$ variables, so we may always remove such variables.

\begin{theorem*}
Let $A \cong \KK[\XX]/I$, where $I$ is a monomial ideal such that $A$ is finite-dimensional of dimension $\ell$. Then $\dim \Aut^0_\KK(A) \geq \ell - 1$. Moreover, equality holds if and only if $A \cong \KK[t]/(t^{\ell})$.
\end{theorem*}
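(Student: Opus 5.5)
The plan is to pass to the Lie algebra and count explicit derivations. Since $\operatorname{char}\KK=0$, the group $\Aut_\KK(A)$ is smooth, so $\dim \Aut^0_\KK(A)=\dim \Der(A)$. It therefore suffices to show $\dim\Der(A)\ge \ell-1$, and to show that equality forces $n=1$. Let $\mathcal{B}$ be the set of standard monomials, i.e.\ the monomials not in $I$. Then $\mathcal{B}$ is a $\KK$-basis of $A$ with $|\mathcal{B}|=\ell$.

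\textbf{Step 1 (a large explicit family).} For each monomial $x^a$ and each index $i$, consider $D_{a,i}=x^a x_i\,\partial/\partial x_i$ on $\KK[\XX]$. On monomials it acts by $D_{a,i}(x^b)=b_i\,x^{a+b}$. Hence it maps $I$ into $I$, because $x^b\in I$ implies $x^{a+b}\in I$, and so it descends to a derivation of $A$. It is nonzero on $A$ exactly when $x^a x_i\in\mathcal{B}$.

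The derivations $D_{a,i}$ with $x^ax_i\in\mathcal{B}$ are linearly independent. Indeed, $D_{a,i}$ kills $x_j$ for $j\neq i$ and sends $x_i$ to the basis monomial $x^ax_i$, and distinct pairs $(a,i)$ give distinct pairs (variable, image monomial). Counting these pairs gives
\[
\dim\Der(A)\ \ge\ \sum_{i=1}^n \#\{m\in\mathcal{B} : x_i\mid m\} \;=\; \sum_{1\neq m\in\mathcal{B}} |\supp(m)| \;\ge\; \ell-1 .
\]
This proves the inequality.

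\textbf{Step 2 (socle derivations).} Let $s\in\mathcal{B}$ be a socle monomial, meaning $x_js\in I$ for all $j$; such an $s$ exists because $A$ is finite-dimensional. Fix an index $j$ and define $E=s\,\partial/\partial x_j$, so that $E(x^b)=b_j\,x^{b-e_j}s$. Then $E$ maps $I$ into $I$. If $x^b\in I$ and $b_j>0$, then $x^b\neq x_j$ because $I$ has no linear generators. Hence $x^{b-e_j}$ is a nonconstant monomial, and its product with the socle monomial $s$ lies in $I$. So $E$ is a derivation of $A$.

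\textbf{Step 3 (the equality case).} Suppose equality holds. Then both inequalities in Step~1 are equalities, and no derivation outside the span of the $D_{a,i}$ exists. In particular every nonconstant standard monomial has support of size exactly one.

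Assume $n\ge2$. Since $x_1,x_2\in\mathcal{B}$ and the socle monomial $s$ has support of size one, some variable $x_j$ does not divide $s$. The derivation $s\,\partial/\partial x_j$ of Step~2 sends $x_j$ to $s$, which is not divisible by $x_j$. Every $D_{a,i}$ sends $x_j$ to a multiple of $x_j$, so $s\,\partial/\partial x_j$ lies outside their span. This contradicts equality, so $n=1$ and $A=\KK[t]/(t^\ell)$.

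Conversely, for $A=\KK[t]/(t^\ell)$ a derivation is determined by $f=D(t)$. The condition $\ell t^{\ell-1}f\in(t^\ell)$ forces $f\in(t)$, which gives $\dim\Der(A)=\ell-1$.

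The main point needing care is the independence argument in Step~3, namely that a socle derivation is genuinely new. Because $\Der(A)$ is $\ZZ^n$-graded, it suffices to compare within each degree: the pair (source variable, target monomial) $(x_j,s)$ with $x_j\nmid s$ does not occur among the $D_{a,i}$. I would also verify separately the small edge cases where $s$ is one of the variables.
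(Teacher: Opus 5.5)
Your proof is correct and is essentially the paper's argument. Your $D_{a,i}=x^a x_i\,\partial/\partial x_i$ are the inner derivations $\partial_{a,\mathrm{e}_i^*}$, your count $\sum_{m\in\mathcal{B}}|\supp(m)|$ is the paper's $\sum_\alpha |E_\alpha| = \ell-1+P(I)$, and your socle derivation $s\,\partial/\partial x_j$ plays the role of the paper's outer root $(a_1-1)\mathrm{e}_1-\mathrm{e}_2$ (once $P(I)=0$, $x_1^{a_1-1}$ is itself a socle monomial). The only difference is that you work purely with lower bounds from explicitly exhibited, linearly independent derivations, so you do not need the completeness of the Demazure-root description of $\Der(A)$ that the paper uses to get the exact identity of \cref{lemma: identity}.
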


In fact, we prove a sharper statement: if $I$ is reducible then $\dim \Aut^0_\KK(A) \geq \ell - 1 + n$, and if $I$ is irreducible then $\dim \Aut^0_\KK(A) \geq \ell - 1 + (n-1)$; see \cref{proposition: cota} and~\cref{lemma: cota}, respectively. The bound $\ell - 1 + n$ is sharp, as shown by $I = \mathfrak{m}^2 \subset \KK[x_1,x_2]$, where $\Aut_\KK(A) \cong \GL_2(\KK)$. Moreover, it is attained only for $n = 2$ and $I = (x_1^a, x_1x_2, x_2^b)$ with $a, b \geq 2$, while for $n \geq 3$ it improves to $\dim \Aut^0_\KK(A) \geq \ell + n$, which is sharp for $n = 3$; see \cref{theorem: equality}. Similarly, the bound $\ell - 1 + (n-1)$ in the irreducible case is attained only for $n = 1$ and for $I = (x_1^2, x_2^2)$; see \cref{remark: exact irreducible}.

Since monomial fat points are classically known to be smoothable~\cite{Har66} (see also~\cite[Proposition~4.15]{CEVV09}), the theorem, combined with the dictionary above, admits the following geometric reformulation over $\CC$: the smoothing components of the versal base space of a monomial fat point of length $\ell$ all have dimension at least $\ell - 1$, with equality if and only if the fat point is curvilinear.

Our proof is combinatorial. The starting point is the explicit description of $\Aut_\KK(A)$ for finite-dimensional monomial algebras obtained in~\cite{DLMR24}, where the homogeneous derivations of $A$ with respect to the natural $\ZZ^n$-grading are parametrized by Demazure roots, in the spirit of the toric case of Demazure~\cite{Dem70}. From this description we derive an exact formula expressing $\dim \Aut^0_\KK(A)$ as $\ell - 1$ plus two nonnegative combinatorial terms (\cref{lemma: identity}), from which the theorem follows quickly. The sharper bounds require more work: for them we combine the above description with the notion of the irredundant irreducible decomposition of a monomial ideal; see~\cite[Theorem~1.3.1]{HH11}. 

The present paper is part of a broader program describing finite-dimensional monomial algebras through their automorphism groups. The structure of $\Aut_\KK(A)$ for such algebras was determined in~\cite{DLMR24}, and in~\cite{DLA24} it was shown that a finite-dimensional monomial algebra is recovered from its automorphism group. The present paper settles, within this class, the natural extremality question raised by Christophersen's problem.

The paper is organized as follows. In Section~1 we fix notation and recall the necessary background on monomial ideals, irredundant irreducible decompositions, and the description of homogeneous derivations of monomial algebras in terms of Demazure roots. In Section~2 we read the Demazure roots from the irredundant irreducible decomposition. In Section~3 we prove the exact formula and deduce the theorem, we establish the sharper bounds in the irreducible and reducible cases, and we characterize the cases in which these bounds are attained.

\section{Preliminaries}

In this section we fix notation and recall the basic facts about monomial
algebras, their irreducible decompositions, and their derivation algebras
that will be used throughout the paper. The main references are
\cite{HH11} for the combinatorics of monomial ideals and \cite{DLMR24}
for the description of automorphism groups.

\subsection{Monomial algebras and irreducible decompositions}

Let $\KK$ be an algebraically closed field of characteristic zero and let $\KK[\XX] = \KK[x_1,\ldots,x_n]$. For a vector $\mathbf{a} = (a_1,\ldots,a_n) \in \ZZ^n_{\geq 0}$, we write $\XX^{\mathbf{a}} = x_1^{a_1}\cdots x_n^{a_n}$ for the corresponding monomial. A \emph{monomial ideal} is an ideal $I \subset \KK[\XX]$ generated by monomials, and a \emph{monomial algebra} is a quotient $A = \KK[\XX]/I$ by such an ideal. We assume throughout that $I$ has no linear generators; this is not a restriction, since any monomial algebra with linear generators is isomorphic to one without them after eliminating the corresponding variables. We denote by $G(I) \subset \ZZ^n_{\geq 0}$ the set of exponent vectors of the unique minimal set of monomial generators of $I$; see~\cite[Proposition~1.1.6]{HH11}. We also denote by $\mathfrak{m} = (x_1,\ldots,x_n)$ the maximal ideal of $\KK[\XX]$ and by $\supp(I) = \{\mathbf{a} \in \ZZ^n_{\geq 0} \mid \XX^{\mathbf{a}} \in I\}$ the support of $I$. In this paper, all monomial algebras are assumed to be finite-dimensional as $\KK$-vector spaces, which is equivalent to requiring that $x_i^{a_i} \in G(I)$ for some $a_i \in \ZZ_{\geq 2}$ and each $1 \leq i \leq n$.

For a vector $\mathbf{b}=(b_1,\ldots,b_n) \in \ZZ^n_{\geq 0}$, we write
$$
\mathfrak{m}^{\mathbf{b}} = (x_1^{b_1},\, x_2^{b_2},\, \ldots,\, x_n^{b_n} \mid b_i \geq 1),
$$
i.e., we only include the generators corresponding to indices $i$ with $b_i \geq 1$. A monomial ideal of this form is called \emph{irreducible}. A monomial ideal that is not irreducible is called \emph{reducible}. Note that for $\mathbf{a}, \mathbf{b} \in \ZZ^n_{\geq 1}$, we have $\mathfrak{m}^{\mathbf{a}} \subseteq \mathfrak{m}^{\mathbf{b}}$ if and only if $a_i \geq b_i$ for every $1 \leq i \leq n$.

\begin{theorem}[{\cite[Theorem~1.3.1, Corollary~1.3.2]{HH11}}]
\label{thm:irred_monomial}
Every monomial ideal $I \subset \KK[\XX]$ admits an irredundant
decomposition into irreducible monomial ideals
$$
I = \mathfrak{m}^{\mathbf{b}_1} \cap \cdots \cap \mathfrak{m}^{\mathbf{b}_r},
$$
and such a decomposition is unique up to the order of the components.
\end{theorem}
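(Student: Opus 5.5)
The plan is the standard argument from \cite{HH11}, which proves existence and uniqueness separately.

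\emph{Existence.} I would induct on the total degree of the minimal generators, i.e.\ on $\sum_{\mathbf{a}\in G(I)}|\mathbf{a}|$ counted over non-pure-power generators. The key splitting lemma is this: if $\XX^{\mathbf{a}}\in G(I)$ is not a pure power, write $\XX^{\mathbf{a}}=uv$ with $u,v$ coprime nonconstant monomials. Then
$$I=(I+(u))\cap(I+(v)).$$
The inclusion ``$\supseteq$'' follows from the fact that intersections of monomial ideals are generated by the lcm's of pairs of generators, and $\operatorname{lcm}(u,v)=uv\in I$. Each ideal $I+(u)$ is strictly closer to being generated by pure powers, since the generator $uv$ is replaced by the smaller $u$. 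Iterating writes $I$ as a finite intersection of ideals generated by pure powers of variables. Each such ideal has the form $\mathfrak{m}^{\mathbf{b}}$ after keeping only the smallest power of each variable. Finally, discard redundant components to obtain an irredundant decomposition.

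\emph{Uniqueness.} First I would show that each $\mathfrak{m}^{\mathbf{b}}$ is irreducible in the lattice of monomial ideals: if $\mathfrak{m}^{\mathbf{b}}=J\cap K$ with $J,K$ monomial, then $\mathfrak{m}^{\mathbf{b}}=J$ or $\mathfrak{m}^{\mathbf{b}}=K$. Suppose neither holds. Choose monomials $f\in J\setminus\mathfrak{m}^{\mathbf{b}}$ and $g\in K\setminus\mathfrak{m}^{\mathbf{b}}$. Then $\operatorname{lcm}(f,g)$ lies in $J\cap K$. On the other hand, $\operatorname{lcm}(f,g)\notin\mathfrak{m}^{\mathbf{b}}$, because membership in $\mathfrak{m}^{\mathbf{b}}$ means some exponent $i$ with $b_i\ge1$ reaches $b_i$, and the lcm takes coordinatewise maxima. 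This is a contradiction.

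Next I would use the prime-avoidance-type property: if $\mathfrak{m}^{\mathbf{b}}\supseteq J_1\cap\cdots\cap J_s$ with monomial $J_k$, then $\mathfrak{m}^{\mathbf{b}}\supseteq J_k$ for some $k$. This follows by the same lcm argument, taking the lcm of witnesses $f_k\in J_k\setminus \mathfrak{m}^{\mathbf{b}}$.

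Given two irredundant decompositions $\bigcap_i\mathfrak{m}^{\mathbf{b}_i}=\bigcap_j\mathfrak{m}^{\mathbf{c}_j}$, this property gives for each $i$ some $j$ with $\mathfrak{m}^{\mathbf{c}_j}\subseteq\mathfrak{m}^{\mathbf{b}_i}$. Symmetrically, it gives some $i'$ with $\mathfrak{m}^{\mathbf{b}_{i'}}\subseteq\mathfrak{m}^{\mathbf{c}_j}$. Irredundancy forces $i=i'$, and hence $\mathfrak{m}^{\mathbf{b}_i}=\mathfrak{m}^{\mathbf{c}_j}$. This yields a bijection between the two sets of components.

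\emph{Main obstacle.} The main step needing care is the lcm-based containment lemma, including the convention that coordinates with $b_i=0$ impose no condition. The termination measure in the existence step must also be chosen so that it visibly decreases; I would use the multiset of generator degrees.
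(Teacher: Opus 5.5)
Your proposal is correct and is essentially the proof of \cite[Theorem~1.3.1]{HH11}, which the paper cites rather than reproves: existence comes from splitting a non-pure-power minimal generator as $uv$ with $I=(I+(u))\cap(I+(v))$, via the lcm description of intersections, and uniqueness comes from the lcm-based containment lemma combined with irredundancy. Your separate irreducibility step is not used in the uniqueness argument as you set it up, since the containment lemma alone suffices, but it does no harm.
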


Recall that the decomposition is called \emph{irredundant} if no component can be omitted without changing the intersection, or equivalently, if $\mathfrak{m}^{\mathbf{b}_i} \not\subseteq \mathfrak{m}^{\mathbf{b}_j}$ for every $i \neq j$. Since $A$ is finite-dimensional and $I$ has no linear generators, we have $\mathbf{b}_j \in \ZZ^n_{\geq 1}$ for every $1 \leq j \leq r$.

For an ideal $I \subset \KK[\XX]$ and an element $f \in \KK[\XX]$, the \emph{colon ideal} $I:(f)$ is defined as $\{g \in \KK[\XX] \mid fg \in I\}$. By~\cite[Proposition~1.2.2]{HH11}, the colon ideal of an irreducible monomial ideal with respect to a variable is given by
\begin{equation}\label{eq:colon_irreducible}
\mathfrak{m}^{\mathbf{b}} : (x_i) = \begin{cases}
\mathfrak{m}^{\mathbf{b}} & \text{if } b_i = 0, \\
\KK[\XX] & \text{if } b_i = 1, \\
\mathfrak{m}^{\mathbf{b} - \mathrm{e}_i} & \text{if } b_i \geq 2.
\end{cases}
\end{equation}

We introduce the following notation. We denote by $\mathcal{M}(I) = \{\mathbf{b}_1,\ldots,\mathbf{b}_r\}$ the set of exponent vectors in the irredundant irreducible decomposition of $I$. For each $\mathbf{b} \in \mathcal{M}(I)$, we write $\widehat{\mathbf{b}} = \mathbf{b} - \sum_{i=1}^n \mathrm{e}_i$ and we set $\widehat{\mathcal{M}}(I) = \{\widehat{\mathbf{b}} \mid \mathbf{b} \in \mathcal{M}(I)\}$. For $\mathbf{b} \in \mathcal{M}(I)$, we denote by $b_j$ its $j$-th component. When $\mathcal{M}(I) = \{\mathbf{b}_1,\ldots,\mathbf{b}_r\}$, we write $b_{ij}$ for the $j$-th component of $\mathbf{b}_i$, and similarly $\widehat{b}_j$ and $\widehat{b}_{ij}$ for the components of $\widehat{\mathbf{b}}$ and $\widehat{\mathbf{b}}_i$ respectively.

\subsection{Derivations and automorphism groups of monomial algebras}

We denote by $\Der(A)$ the Lie algebra of derivations of $A = \KK[\XX]/I$, which coincides with the Lie algebra of the automorphism group $\Aut_\KK(A)$; see~\cite[II,~\textsection 4,~2.3]{DG70}. 

The monomial algebra $A$ inherits a natural $\ZZ^n$-grading from
$\KK[\XX]$, which corresponds to a faithful action of the algebraic torus
$T = \GM^n$. This action induces a weight decomposition of the Lie algebra
$$
\Der(A) = \bigoplus_{\alpha \in \ZZ^n} \mathfrak{g}_\alpha,
$$
where $\mathfrak{g}_\alpha$ denotes the space of homogeneous derivations
of degree~$\alpha$. The weight space $\mathfrak{g}_0$ corresponds to the Lie algebra of the diagonal torus $T = \GM^n$ acting on $A$. Since this action is faithful, $\dim \mathfrak{g}_0 = n$. The degree $\alpha$, and the corresponding weight space $\mathfrak{g}_\alpha$, is called \emph{inner} if $\alpha \in \ZZ^n_{\geq 0}$ and \emph{outer} otherwise.

Let $M = \ZZ^n$ and let $N = \homo(M,\ZZ)$ be its dual lattice. We also set $N_\KK = N \otimes_\ZZ \KK$. We denote by $\{\mathrm{e}_1,\ldots,\mathrm{e}_n\}$ the canonical basis of $M$ and
by $\{\mathrm{e}_1^*,\ldots,\mathrm{e}_n^*\}$ the dual basis of $N$. For $\alpha \in M$ and $p \in N_\KK$, the map
$$
\partial_{\alpha,p} \colon \XX^{\mathbf{a}} \mapsto p(\mathbf{a})\,\XX^{\mathbf{a}+\alpha}
$$
defines a homogeneous derivation of degree $\alpha$ on the Laurent polynomial ring $\KK[\XX^{\pm 1}]$, and every homogeneous derivation of $\KK[\XX^{\pm 1}]$ arises this way by the Leibniz rule; see~\cite[Section~1]{DLMR24}. This derivation restricts to a derivation of $\KK[\XX]$ in two cases; see~\cite[Proposition~3.1]{KLL15} and~\cite[Proposition~1.3]{DLMR24}. First, if $\alpha \in \ZZ^n_{\geq 0}\subset M$, the restriction is well-defined for any $p \in N_\KK$; we call $\partial_{\alpha,p}$ \emph{inner} in this case. Second, if
$$
\alpha \in \mathcal{R}_{\mathrm{e}_i^*} := \left\{\alpha \in \ZZ^n \mid \mathrm{e}_i^*(\alpha) = -1 \text{ and } \mathrm{e}_j^*(\alpha) \geq 0 \text{ for all } j \neq i\right\}
$$
for some $1 \leq i \leq n$, and $p = \lambda \mathrm{e}_i^*$ for some $\lambda \in \KK$, the restriction is also well-defined; we call $\partial_{\alpha,p}$ \emph{outer}. 

Since every derivation of $A$ lifts to a derivation of the polynomial ring $\KK[\XX]$ preserving $I$ (see~\cite[Lemma~2.10 and Remark~2.3]{DLMR24}), every homogeneous derivation of $A$ is induced by a derivation of the form $\partial_{\alpha,p}$. We denote by $\overline{\partial}_{\alpha,p}$ the induced derivation on the quotient $A = \KK[\XX]/I$. Note that $\overline{\partial}_{\alpha,p}$ may be the zero derivation even when $\partial_{\alpha,p}$ is not; this happens if and only if $(\ZZ^n_{\geq 0} \setminus p^\perp) + \alpha \subset \supp(I)$; see~\cite[Lemma~3.9]{DLMR24}. In particular, $\mathfrak{g}_\alpha = 0$ for every $\alpha \in \supp(I)$, since in this case $\mathbf{a} + \alpha \in \supp(I)$ for every $\mathbf{a} \in \ZZ^n_{\geq 0}$.

When $\partial_{\alpha,p}$ is inner, it preserves any monomial ideal. Since $A$ is finite-dimensional, any non-zero $\overline{\partial}_{\alpha,p}$ with $\alpha \in \ZZ^n_{\geq 0} \setminus \{0\}$ is nilpotent; see~\cite[Lemma~3.4]{DLMR24}. When $\partial_{\alpha,p}$ is outer with $\alpha \in \mathcal{R}_{\mathrm{e}_i^*}$, it preserves $I$ if and only if
\begin{equation}\label{eq: outer criterion}
\mathbf{a} + \alpha \in \supp(I) \quad \text{for all } \mathbf{a} \in G(I) \setminus (\mathrm{e}_i^*)^\perp;
\end{equation}
see~\cite[Proposition~1.4]{DLMR24}. In this case, the induced derivation is also nilpotent on $A$; see~\cite[Lemma~3.3]{DLMR24}.

We call an element $\alpha \in M \setminus \{0\}$ a \emph{Demazure root} of $I$ if there exists $p \in N_\KK$ such that $\partial_{\alpha,p}$ induces a nonzero derivation $\overline{\partial}_{\alpha,p}$ of $A$, and we denote by $\mathcal{R}(I)$ the set of all Demazure roots of $I$. We call a Demazure root \emph{inner} if $\alpha \in \ZZ^n_{\geq 0}$ and \emph{outer} otherwise. For each $1 \leq j \leq n$, we set $\mathcal{R}_{\mathrm{e}_j^*}(I) = \mathcal{R}_{\mathrm{e}_j^*} \cap \mathcal{R}(I)$, so that every outer Demazure root belongs to a unique $\mathcal{R}_{\mathrm{e}_j^*}(I)$. For each $\alpha \in \ZZ^n_{\geq 0}$, we define
\begin{equation}\label{eqn def E_alpha}
E_\alpha := \{\mathrm{e}_i \mid \alpha + \mathrm{e}_i \in \ZZ^n_{\geq 0} \setminus \supp(I)\}.
\end{equation}
For each $\alpha \in \mathcal{R}(I)$, we define the $\KK$-vector space
$$
N_\KK(\alpha) = \begin{cases}
\displaystyle\bigoplus_{\mathrm{e}_i \in E_\alpha} \KK \cdot \mathrm{e}_i^* & \text{if } \alpha \text{ is inner}, \\[0.8em]
\KK \cdot \mathrm{e}_j^* & \text{if } \alpha \in \mathcal{R}_{\mathrm{e}_j^*}(I).
\end{cases}
$$

The following lemma shows that $N_\KK(\alpha)$ parametrizes the homogeneous derivations of degree $\alpha$ on $A$.

\begin{lemma}\label{lemma:G_alpha_iso}
Let $I \subset \KK[\XX]$ be a monomial ideal such that $A = \KK[\XX]/I$ is
finite-dimensional. For each $\alpha \in \mathcal{R}(I)$, the linear map
$$
N_\KK(\alpha) \to \mathfrak{g}_\alpha, \quad p \mapsto
\overline{\partial}_{\alpha,p}
$$
is an isomorphism of $\KK$-vector spaces. In particular,
$\dim \mathfrak{g}_\alpha = |E_\alpha|$ for inner roots and
$\dim \mathfrak{g}_\alpha = 1$ for outer roots.
\end{lemma}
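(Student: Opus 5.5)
The plan is to establish well-definedness, injectivity and surjectivity of the map $p \mapsto \overline{\partial}_{\alpha,p}$ separately, treating inner and outer roots in turn. For well-definedness we need that $\partial_{\alpha,p}$ preserves $I$ for every $p \in N_\KK(\alpha)$. If $\alpha$ is inner this holds because inner derivations preserve every monomial ideal. If $\alpha \in \mathcal{R}_{\mathrm{e}_j^*}(I)$, then $\alpha$ is a root, so some $\partial_{\alpha,p}$ induces a nonzero derivation. Restriction to $\KK[\XX]$ forces $p = \lambda \mathrm{e}_j^*$, and a nonzero induced derivation forces $\lambda \neq 0$. Criterion \eqref{eq: outer criterion} does not depend on $\lambda$, so every multiple $\lambda\mathrm{e}_j^*$ also preserves $I$. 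Linearity in $p$ is immediate from the formula for $\partial_{\alpha,p}$.

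Next comes surjectivity. Every element of $\mathfrak{g}_\alpha$ is induced by some $\partial_{\alpha,p}$ preserving $I$, with $p$ arbitrary in the inner case and $p \in \KK\mathrm{e}_j^*$ in the outer case. In the outer case this already gives surjectivity. In the inner case, write $p = \sum_i p_i \mathrm{e}_i^*$ and split it as $p = p' + p''$, where $p' = \sum_{\mathrm{e}_i \in E_\alpha} p_i \mathrm{e}_i^*$ and $p''$ collects the indices with $\alpha + \mathrm{e}_i \in \supp(I)$. We must show $\overline{\partial}_{\alpha,p''} = 0$. For any $\mathbf{a}$ with $p''(\mathbf{a}) \neq 0$, some index $i$ occurring in $p''$ has $a_i \geq 1$. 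Then $\mathbf{a} + \alpha \geq \mathrm{e}_i + \alpha \in \supp(I)$, so $\mathbf{a} + \alpha \in \supp(I)$ because $\supp(I)$ is an upward-closed set. This is exactly the vanishing criterion of \cite[Lemma~3.9]{DLMR24}, so $\overline{\partial}_{\alpha,p} = \overline{\partial}_{\alpha,p'}$.

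Injectivity is the main point. In the outer case the kernel is a subspace of the one-dimensional space $\KK\mathrm{e}_j^*$. It cannot be everything, because $\alpha$ is a root and so some $\lambda\mathrm{e}_j^*$ induces a nonzero derivation; hence the kernel is zero. In the inner case, suppose $p = \sum_{\mathrm{e}_i \in E_\alpha} p_i \mathrm{e}_i^* \neq 0$ and pick $i$ with $p_i \neq 0$. Evaluate $\overline{\partial}_{\alpha,p}$ on $x_i$: the result is $p(\mathrm{e}_i)\XX^{\alpha + \mathrm{e}_i} = p_i \XX^{\alpha+\mathrm{e}_i}$. This is nonzero in $A$ because $\alpha + \mathrm{e}_i \notin \supp(I)$ by the definition of $E_\alpha$. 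Hence the kernel is trivial.

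The dimension statements then follow directly, since $\dim N_\KK(\alpha) = |E_\alpha|$ for inner roots and $1$ for outer roots. The only delicate step is the upward-closedness argument in surjectivity, where one must match the $\mathbf{a}$ with $p''(\mathbf{a}) \neq 0$ against the support condition defining $E_\alpha$; that step is handled by the coordinate choice above.
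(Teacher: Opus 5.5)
Your proposal is correct and follows essentially the same route as the paper: injectivity in the inner case by evaluating $\overline{\partial}_{\alpha,p}$ on a variable $x_i$ with $\mathrm{e}_i \in E_\alpha$, and surjectivity by splitting $p = p' + p''$ and showing $\overline{\partial}_{\alpha,p''} = 0$. The only minor differences are these: you check the vanishing of $\overline{\partial}_{\alpha,p''}$ monomial-by-monomial via upward closure of $\supp(I)$ rather than on the generators $\overline{x}_i$, and you obtain outer injectivity from the one-dimensionality of $\KK\,\mathrm{e}_j^*$ rather than by explicitly showing $\mathrm{e}_j + \alpha \notin \supp(I)$.
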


\begin{proof}
We first show injectivity. Let $p \in N_\KK(\alpha)$ be nonzero.

If $\alpha$ is outer with $\alpha \in \mathcal{R}_{\mathrm{e}_j^*}(I)$, then $p = \lambda \mathrm{e}_j^*$ for some $\lambda \neq 0$. Since $\alpha$ is a Demazure root, there exists $\mathbf{a} \in \ZZ^n_{\geq 0} \setminus \supp(I)$ with $a_j \geq 1$ such that $\mathbf{a} + \alpha \notin \supp(I)$. As $\supp(I)$ is closed under the partial order and $\mathrm{e}_j + \alpha \leq \mathbf{a} + \alpha$, we get $\mathrm{e}_j + \alpha \notin \supp(I)$. Hence $\overline{\partial}_{\alpha,p}(\overline{x}_j) = \lambda\,\overline{\XX}^{\mathrm{e}_j + \alpha} \neq 0$.

If $\alpha$ is inner, then $p \in N_\KK(\alpha) = \bigoplus_{\mathrm{e}_i \in E_\alpha} \KK \cdot \mathrm{e}_i^*$, so since $p \neq 0$, there exists $\mathrm{e}_i \in E_\alpha$ such that $p(\mathrm{e}_i) \neq 0$. By definition of $E_\alpha$, we have $\alpha + \mathrm{e}_i \notin \supp(I)$, and so $\overline{\partial}_{\alpha,p}(\overline{x}_i) = p(\mathrm{e}_i)\,\overline{\XX}^{\mathrm{e}_i+\alpha} \neq 0$.

In both cases, $\overline{\partial}_{\alpha,p} \neq 0$, proving injectivity.

For surjectivity, let $p \in N_\KK$ be such that $\overline{\partial}_{\alpha,p} \neq 0$. We decompose $p = p_1 + p_2$, where $p_1 \in N_\KK(\alpha)$ and $p_2 \in \bigoplus_{\mathrm{e}_i \notin E_\alpha} \KK \cdot \mathrm{e}_i^*$. If $\alpha$ is outer with $\alpha \in \mathcal{R}_{\mathrm{e}_j^*}(I)$, then $p = \lambda \mathrm{e}_j^*$ for some $\lambda \in \KK$, so $p_2 = 0$. If $\alpha$ is inner, then for each $1 \leq i \leq n$, if $\mathrm{e}_i \in E_\alpha$ then $p_2(\mathrm{e}_i) = 0$ by construction of $N_\KK(\alpha)$, and if $\mathrm{e}_i \notin E_\alpha$ then $\alpha + \mathrm{e}_i \in \supp(I)$, so $\overline{\XX}^{\mathrm{e}_i + \alpha} = 0$ in $A$. In both cases, $\overline{\partial}_{\alpha,p_2}(\overline{x}_i) = 0$, and since derivations are determined by their action on the generators, we have $\overline{\partial}_{\alpha,p_2} = 0$. Hence $\overline{\partial}_{\alpha,p} = \overline{\partial}_{\alpha,p_1}$ with $p_1 \in N_\KK(\alpha)$, proving surjectivity.
\end{proof}

We illustrate the above notions with an example that we will revisit throughout the paper.

\begin{example}\label{example: running}
Let $n = 2$ and
$I = (x_1^3,\, x_1^2 x_2,\, x_1 x_2^3,\, x_2^5) \subset \KK[x_1,x_2]$.
The irredundant irreducible decomposition of $I$ is
$$
I = \mathfrak{m}^{(3,1)} \cap \mathfrak{m}^{(2,3)} \cap
\mathfrak{m}^{(1,5)},
$$
so $\mathcal{M}(I) = \{(3,1),\,(2,3),\,(1,5)\}$ and $A = \KK[x_1,x_2]/I$
has dimension $\ell = |\ZZ^2_{\geq 0} \setminus \supp(I)| = 9$. The
shifted vectors are $\widehat{\mathbf{b}}_1 = (2,0)$,
$\widehat{\mathbf{b}}_2 = (1,2)$, and $\widehat{\mathbf{b}}_3 = (0,4)$.
Note that $\widehat{\mathbf{b}}_1$ lies on the $x_1$-axis and
$\widehat{\mathbf{b}}_3$ lies on the $x_2$-axis. 

The inner Demazure roots and the dimensions of the corresponding spaces $\mathfrak{g}_\alpha$ are:
$$
\begin{array}{c|ccccc}
\alpha & (1,0) & (0,1) & (1,1) & (0,2) & (0,3) \\
\hline
\dim \mathfrak{g}_\alpha & 2 & 2 & 1 & 2 & 1
\end{array}
$$
The outer Demazure roots are: $(1,-1)$ and $(2,-1)$ in $\mathcal{R}_{\mathrm{e}_2^*}(I)$, and $(-1,2)$, $(-1,3)$, $(-1,4)$ in $\mathcal{R}_{\mathrm{e}_1^*}(I)$, each contributing $\dim \mathfrak{g}_\alpha = 1$. In \cref{fig:running-roots}, solid black dots represent $\supp(I)$, squares mark the shifted vectors $\widehat{\mathbf{b}}_j$, colored circles indicate Demazure roots (green for inner and red for outer), the hollow black circle marks the origin, and gray dots mark the remaining lattice points outside $\ZZ^2_{\geq 0}$.

\begin{figure}[H]
\centering
\begin{picture}(100,80)
\definecolor{gray1}{gray}{0.7}
\definecolor{gray2}{gray}{0.85}
\definecolor{green}{RGB}{0,124,0}
\textcolor{gray2}{\put(0,20){\vector(1,0){95}}}
\textcolor{gray2}{\put(10,10){\vector(0,1){68}}}
% row y=-1
\put(0,10){\textcolor{gray1}{\circle*{3}}}
\put(10,10){\textcolor{gray1}{\circle*{3}}}
\put(20,10){\textcolor{red}{\circle{3}}}
\put(30,10){\textcolor{red}{\circle{3}}}
\put(40,10){\textcolor{gray1}{\circle*{3}}}
\put(50,10){\textcolor{gray1}{\circle*{3}}}
\put(60,10){\textcolor{gray1}{\circle*{3}}}
\put(70,10){\textcolor{gray1}{\circle*{3}}}
\put(80,10){\textcolor{gray1}{\circle*{3}}}
% row y=0
\put(0,20){\textcolor{gray1}{\circle*{3}}}
\put(10,20){\circle{3}}
\put(20,20){\textcolor{green}{\circle{3}}}
\put(30,20){\framebox(3,3){}}
\put(40,20){\circle*{3}}
\put(50,20){\circle*{3}}
\put(60,20){\circle*{3}}
\put(70,20){\circle*{3}}
\put(80,20){\circle*{3}}
% row y=1
\put(0,30){\textcolor{gray1}{\circle*{3}}}
\put(10,30){\textcolor{green}{\circle{3}}}
\put(20,30){\textcolor{green}{\circle{3}}}
\put(30,30){\circle*{3}}
\put(40,30){\circle*{3}}
\put(50,30){\circle*{3}}
\put(60,30){\circle*{3}}
\put(70,30){\circle*{3}}
\put(80,30){\circle*{3}}
% row y=2
\put(0,40){\textcolor{red}{\circle{3}}}
\put(10,40){\textcolor{green}{\circle{3}}}
\put(20,40){\framebox(3,3){}}
\put(30,40){\circle*{3}}
\put(40,40){\circle*{3}}
\put(50,40){\circle*{3}}
\put(60,40){\circle*{3}}
\put(70,40){\circle*{3}}
\put(80,40){\circle*{3}}
% row y=3
\put(0,50){\textcolor{red}{\circle{3}}}
\put(10,50){\textcolor{green}{\circle{3}}}
\put(20,50){\circle*{3}}
\put(30,50){\circle*{3}}
\put(40,50){\circle*{3}}
\put(50,50){\circle*{3}}
\put(60,50){\circle*{3}}
\put(70,50){\circle*{3}}
\put(80,50){\circle*{3}}
% row y=4
\put(0,60){\textcolor{red}{\circle{3}}}
\put(10,60){\framebox(3,3){}}
\put(20,60){\circle*{3}}
\put(30,60){\circle*{3}}
\put(40,60){\circle*{3}}
\put(50,60){\circle*{3}}
\put(60,60){\circle*{3}}
\put(70,60){\circle*{3}}
\put(80,60){\circle*{3}}
% row y=5
\put(0,70){\textcolor{gray1}{\circle*{3}}}
\put(10,70){\circle*{3}}
\put(20,70){\circle*{3}}
\put(30,70){\circle*{3}}
\put(40,70){\circle*{3}}
\put(50,70){\circle*{3}}
\put(60,70){\circle*{3}}
\put(70,70){\circle*{3}}
\put(80,70){\circle*{3}}
\end{picture}
\caption{$I = (x_1^3,\, x_1^2 x_2,\, x_1 x_2^3,\, x_2^5)$}
\label{fig:running-roots}
\end{figure}

In particular, $\dim \mathfrak{g}_0 = 2$ and $\sum_{\alpha \in \mathcal{R}(I)} \dim \mathfrak{g}_\alpha = 13$, so $\dim \Aut^0_\KK(A) = 2+13 = 15 > 8 = \ell - 1$.
\end{example}

\section{Demazure roots via irreducible decompositions}

In this section we read the Demazure roots of $I$ from its irredundant irreducible decomposition. We characterize the vanishing of the sets $E_\alpha$ defined in~\eqref{eqn def E_alpha} in terms of the shifted vectors $\widehat{\mathcal{M}}(I)$, and we construct outer Demazure roots from the geometry of $\widehat{\mathcal{M}}(I)$. Both results will be needed to establish the sharper bound in the reducible case.

We start with a standard property of the colon ideal that will be used in the next proposition.

\begin{lemma}\label{lemma: intersection}
Let $I, J \subset \KK[\XX]$ be monomial ideals. Then
$(I \cap J) : (x_i) = (I : (x_i)) \cap (J : (x_i))$.
\end{lemma}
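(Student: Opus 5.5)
We prove the statement directly from the definition of the colon ideal, by double inclusion. In fact, nothing specific to monomial ideals or to the variable $x_i$ is needed: the argument works for arbitrary ideals $I,J$ and any element $f$ in place of $x_i$.

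For the inclusion $(I\cap J):(x_i) \subseteq (I:(x_i))\cap(J:(x_i))$, take $g$ with $x_i g \in I\cap J$. Then $x_i g\in I$, so $g\in I:(x_i)$. Likewise $x_i g\in J$, so $g\in J:(x_i)$.

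For the reverse inclusion, take $g\in (I:(x_i))\cap(J:(x_i))$. Then $x_i g\in I$ and $x_i g\in J$, hence $x_i g\in I\cap J$. This gives $g\in (I\cap J):(x_i)$.

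No step presents a real obstacle; the only thing to check is that we never use monomiality. It is nevertheless worth recording, for later use with \eqref{eq:colon_irreducible}, that both sides are again monomial ideals. Indeed, the colon of a monomial ideal by a monomial is monomial, and an intersection of monomial ideals is monomial. Combined with the lemma, this lets us compute $I:(x_i)$ component by component from the irredundant decomposition of \cref{thm:irred_monomial}.
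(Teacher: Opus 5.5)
Your proof is correct and is the paper's argument: the paper also unfolds the definition of the colon ideal, using that $x_i f \in I \cap J$ holds if and only if $x_i f \in I$ and $x_i f \in J$. You are also right that monomiality is never used, and the paper's proof does not use it either.
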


\begin{proof}
An element $f$ belongs to $(I \cap J) : (x_i)$ if and only if
$x_i f \in I \cap J$, which holds if and only if $x_i f \in I$ and
$x_i f \in J$, i.e., $f \in I : (x_i)$ and $f \in J : (x_i)$.
\end{proof}

The next proposition characterizes the degrees $\alpha \in \ZZ^n_{\geq 0} \setminus \supp(I)$ for which $E_\alpha = \emptyset$, equivalently $\mathfrak{g}_\alpha = 0$, in terms of the shifted vectors $\widehat{\mathcal{M}}(I)$.

\begin{proposition}\label{proposition: empty}
Let $I$ be a monomial ideal with irredundant irreducible decomposition
$I = \mathfrak{m}^{\mathbf{b}_1} \cap \cdots \cap \mathfrak{m}^{\mathbf{b}_r}$. For
$\alpha \in \ZZ^n_{\geq 0} \setminus \supp(I)$, we have
$E_\alpha = \emptyset$ if and only if
$\alpha = \widehat{\mathbf{b}}_j$ for some $1 \leq j \leq r$.
\end{proposition}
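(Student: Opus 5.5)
The plan is to translate the condition $E_\alpha = \emptyset$ into a statement about the colon ideals $I:(x_i)$, and then compute these colons componentwise using \cref{lemma: intersection} and \eqref{eq:colon_irreducible}. By definition, $E_\alpha = \emptyset$ means that $\alpha + \mathrm{e}_i \in \supp(I)$ for every $1 \le i \le n$, that is, $\XX^{\alpha} \in I : (x_i)$ for all $i$. Together with the standing hypothesis $\alpha \notin \supp(I)$, this says exactly that $\XX^\alpha$ is a monomial of the socle $(I:\mathfrak{m})/I$. The statement therefore amounts to the classical fact that the socle monomials of a finite-colength monomial ideal are precisely the $\XX^{\widehat{\mathbf{b}}_j}$. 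I would prove this directly, with both implications handled through the irreducible components.

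For the direction ($\Leftarrow$), I take $\alpha = \widehat{\mathbf{b}}_j$. Since each $b_{jk} \ge 1$, the vector $\alpha$ has nonnegative entries. In the $k$-th slot it has $\alpha_k = b_{jk}-1 < b_{jk}$ for every $k$, so $\XX^\alpha \notin \mathfrak{m}^{\mathbf{b}_j}$ and hence $\alpha \notin \supp(I)$.

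For each $i$, I need $\alpha + \mathrm{e}_i \in \supp(\mathfrak{m}^{\mathbf{b}_k})$ for all $k$. When $k=j$ this is clear, since the $i$-th entry becomes $b_{ji}$. When $k \ne j$, irredundancy gives $\mathfrak{m}^{\mathbf{b}_j}\not\subseteq\mathfrak{m}^{\mathbf{b}_k}$, so $b_{jl} < b_{kl}$ for some $l$. I now need some coordinate $m$ with $(\alpha+\mathrm{e}_i)_m \ge b_{km}$, and for that the key observation is that $\widehat{\mathbf{b}}_j \notin \supp(I)$ alone is not enough. The right tool is the characterization of irredundant decompositions via minimal elements. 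Choose $\beta$ maximal among the standard monomials (not in $\supp(I)$) with $\beta \ge \widehat{\mathbf{b}}_j$; such a $\beta$ exists because the set of standard monomials is finite. Then $\beta$ is a socle element. If it were $\ne \widehat{\mathbf{b}}_j$, then the forward direction (proved next) gives $\beta = \widehat{\mathbf{b}}_k$ with $\mathbf{b}_k \ge \mathbf{b}_j$, $k \neq j$. This means $\mathfrak{m}^{\mathbf{b}_k} \subseteq \mathfrak{m}^{\mathbf{b}_j}$, contradicting irredundancy. So it is cleaner to prove ($\Rightarrow$) first and then deduce ($\Leftarrow$) this way.

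For ($\Rightarrow$), suppose $E_\alpha=\emptyset$ and $\alpha\notin\supp(I)$. Since $\XX^\alpha \notin I$, I can pick $j$ with $\XX^\alpha\notin\mathfrak{m}^{\mathbf{b}_j}$, i.e. $\alpha_k \le b_{jk}-1$ for all $k$, i.e. $\alpha \le \widehat{\mathbf{b}}_j$. I claim equality holds. The vector $\widehat{\mathbf{b}}_j$ is also not in $\supp(\mathfrak{m}^{\mathbf{b}_j})$, hence not in $\supp(I)$. If $\alpha < \widehat{\mathbf{b}}_j$ strictly, there is an $i$ with $\alpha+\mathrm{e}_i \le \widehat{\mathbf{b}}_j$. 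Since $\supp(I)$ is an upward-closed set, this gives $\alpha+\mathrm{e}_i \notin \supp(I)$, so $\mathrm{e}_i \in E_\alpha$, a contradiction. Hence $\alpha = \widehat{\mathbf{b}}_j$.

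The main obstacle is the ($\Leftarrow$) direction, namely showing that each $\widehat{\mathbf{b}}_j$ really is a socle element. This is where irredundancy is essential: for a redundant component, $\widehat{\mathbf{b}}$ can fail. The maximal-element argument above handles it using only ($\Rightarrow$) and the containment criterion $\mathfrak{m}^{\mathbf{a}}\subseteq\mathfrak{m}^{\mathbf{b}} \iff \mathbf{a}\ge\mathbf{b}$.
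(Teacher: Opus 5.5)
Your proof is correct. Your ($\Rightarrow$) direction is essentially the paper's: where the paper uses \eqref{eq:colon_irreducible} to get $\alpha_i \geq b_{ji}-1$, you reach the same conclusion directly from upward closure of $\supp(I)$ applied to $\alpha+\mathrm{e}_i \leq \widehat{\mathbf{b}}_j$, which is slightly more elementary.

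For ($\Leftarrow$) you take a genuinely different route. You pick a maximal standard exponent $\beta \geq \widehat{\mathbf{b}}_j$, use maximality to get $E_\beta=\emptyset$, apply ($\Rightarrow$) to write $\beta=\widehat{\mathbf{b}}_k$, and use irredundancy to force $k=j$. The paper instead checks directly that $\XX^{\widehat{\mathbf{b}}_j}\in\mathfrak{m}^{\mathbf{b}_q}$ for every $q\neq j$.

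Your remark that the direct approach ``is not enough'' comes from invoking the wrong one of the two non-containments. The useful one is $\mathfrak{m}^{\mathbf{b}_k}\not\subseteq\mathfrak{m}^{\mathbf{b}_j}$, which is exactly the form your maximality argument ends up using. It gives a coordinate $m$ with $b_{km}<b_{jm}$, i.e.\ $b_{km}\leq \widehat{b}_{jm}$. Hence already $\XX^{\widehat{\mathbf{b}}_j}\in\mathfrak{m}^{\mathbf{b}_k}$, and so $\widehat{\mathbf{b}}_j+\mathrm{e}_i\in\supp(\mathfrak{m}^{\mathbf{b}_k})$ for every $i$.

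The direct argument is shorter and needs neither ($\Rightarrow$) nor finiteness of the set of standard exponents above $\widehat{\mathbf{b}}_j$. Your argument is longer, and you should spell out why maximality of $\beta$ gives $\beta+\mathrm{e}_i\in\supp(I)$ for all $i$. In exchange it shows the underlying picture: the socle exponents are exactly the maximal standard exponents, and irredundancy is what makes each $\widehat{\mathbf{b}}_j$ maximal.
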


\begin{proof}
Let $\alpha \in \ZZ^n_{\geq 0} \setminus \supp(I)$. By definition,
$\mathrm{e}_i \in E_\alpha$ if and only if $\alpha + \mathrm{e}_i \notin \supp(I)$, which is equivalent to $\XX^{\alpha + \mathrm{e}_i} \notin I$. By definition of the colon ideal, this holds if and only if $\XX^\alpha \notin I : (x_i)$. Therefore, $E_\alpha = \emptyset$ if and only if $\XX^\alpha \in I : (x_i)$ for every $1 \leq i \leq n$, i.e.,
$$
\XX^\alpha \in \bigcap_{i=1}^n (I : (x_i)).
$$

By \cref{lemma: intersection} and~\eqref{eq:colon_irreducible}, we have
$$
\bigcap_{i=1}^n (I : (x_i)) = \bigcap_{j=1}^r \bigcap_{i=1}^n (\mathfrak{m}^{\mathbf{b}_j} : (x_i)).
$$
We denote this ideal by $\widehat{I}$. We seek $\alpha \in \ZZ^n_{\geq 0}$ such that $\XX^\alpha \notin I$ and $\XX^\alpha \in \widehat{I}$.

Since $\mathbf{b}_j \in \ZZ^n_{\geq 1}$ for every $j$, the condition $\XX^\alpha \notin I$ means that there exists $j$ such that $\XX^\alpha \notin \mathfrak{m}^{\mathbf{b}_j}$, i.e., $\alpha_i < b_{ji}$ for every $1 \leq i \leq n$. On the other hand, $\XX^\alpha \in \widehat{I}$ implies in particular that $\XX^\alpha \in \bigcap_{i=1}^n (\mathfrak{m}^{\mathbf{b}_j} : (x_i))$. We claim that $\alpha_i \geq b_{ji} - 1$ for every $i$. Indeed, if $b_{ji} = 1$, then $\alpha_i < b_{ji} = 1$ already gives $\alpha_i = 0 = b_{ji} - 1$. If $b_{ji} \geq 2$, then~\eqref{eq:colon_irreducible} gives $\XX^\alpha \in \mathfrak{m}^{\mathbf{b}_j - \mathrm{e}_i}$, so $\XX^\alpha$ is divisible by some generator $x_k^{(b_j-\mathrm{e}_i)_k}$ with $(b_j-\mathrm{e}_i)_k \geq 1$. If $k \neq i$, this would give $\alpha_k \geq b_{jk}$, contradicting $\alpha_k < b_{jk}$. Hence $\alpha_i \geq b_{ji} - 1$. Combined with $\alpha_i < b_{ji}$, we obtain $\alpha_i = b_{ji} - 1$ for every $i$, i.e.,
$\alpha = \widehat{\mathbf{b}}_j$.

Conversely, let $\alpha = \widehat{\mathbf{b}}_j$ for some $j$. Since $(\widehat{\mathbf{b}}_j)_i = b_{ji} - 1 < b_{ji}$ for every $i$, we have $\XX^{\widehat{\mathbf{b}}_j} \notin \mathfrak{m}^{\mathbf{b}_j}$, and hence $\XX^{\widehat{\mathbf{b}}_j} \notin I$. For each $q \neq j$, if $\XX^{\widehat{\mathbf{b}}_j} \notin \mathfrak{m}^{\mathbf{b}_q}$, then $b_{ji} \leq b_{qi}$ for every $i$, giving $\mathfrak{m}^{\mathbf{b}_q} \subseteq \mathfrak{m}^{\mathbf{b}_j}$, contradicting irredundancy. Hence $\XX^{\widehat{\mathbf{b}}_j} \in \mathfrak{m}^{\mathbf{b}_q}$ for all $q \neq j$. Now, for each $i$, the $i$-th component of $\widehat{\mathbf{b}}_j + \mathrm{e}_i$ is $b_{ji} \geq 1$, so $\XX^{\widehat{\mathbf{b}}_j + \mathrm{e}_i} \in \mathfrak{m}^{\mathbf{b}_j}$. Combined with the above, this gives $\widehat{\mathbf{b}}_j + \mathrm{e}_i \in \supp(I)$ for every $i$, i.e., $E_{\widehat{\mathbf{b}}_j} = \emptyset$.
\end{proof}

The next lemma constructs outer Demazure roots from the shifted vectors $\widehat{\mathbf{b}} \in \widehat{\mathcal{M}}(I)$ that lie on a proper face of $\ZZ^n_{\geq 0}$.

\begin{lemma}\label{lemma: outer}
Let $I$ be a reducible monomial ideal, let $\widehat{\mathbf{b}} \in \widehat{\mathcal{M}}(I)$, and let $F \neq \{0\}$ be the minimal face of $\ZZ^n_{\geq 0}\subset M$ containing $\widehat{\mathbf{b}}$. Let $K \subset \{1,\ldots,n\}$ be the subset of indices $i$ such that $\mathrm{e}_i \in F$, and set $K' = \{1,\ldots,n\} \setminus K$. Then, for each $i \in K'$, the element
$$
\alpha = \sum_{j \in K} \widehat{b}_j \mathrm{e}_j - \mathrm{e}_i
$$
belongs to $\mathcal{R}(I)$.
\end{lemma}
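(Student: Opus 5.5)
The plan is to check the three conditions that make $\alpha$ an outer Demazure root: that $\alpha\in\mathcal{R}_{\mathrm{e}_i^*}$, that $\partial_{\alpha,\mathrm{e}_i^*}$ preserves $I$ via the criterion~\eqref{eq: outer criterion}, and that the induced derivation $\overline{\partial}_{\alpha,\mathrm{e}_i^*}$ on $A$ is nonzero. The basic observation is that, by minimality of $F$, we have $\widehat{b}_j\ge 1$ for $j\in K$ and $\widehat{b}_j=0$, that is $b_j=1$, for $j\in K'$. Hence
$$
\alpha=\widehat{\mathbf{b}}-\mathrm{e}_i \qquad\text{for every } i\in K'.
$$
This immediately gives $\mathrm{e}_i^*(\alpha)=-1$ and $\mathrm{e}_j^*(\alpha)\ge 0$ for $j\neq i$, so $\alpha\in\mathcal{R}_{\mathrm{e}_i^*}$. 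In particular $\alpha\neq 0$. If $K'=\emptyset$ the statement is vacuous.

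\textbf{Nonvanishing.} I take the monomial $x_i$, so $\mathbf{a}=\mathrm{e}_i$. Since $I$ has no linear generators, $\mathrm{e}_i\notin\supp(I)$. Moreover $\mathrm{e}_i+\alpha=\widehat{\mathbf{b}}$, and $\widehat{\mathbf{b}}\notin\supp(I)$ by \cref{proposition: empty}, or directly from its proof. Therefore $\overline{\partial}_{\alpha,\mathrm{e}_i^*}(\overline{x}_i)=\overline{\XX}^{\widehat{\mathbf{b}}}\neq 0$, provided the derivation is well defined on $A$.

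\textbf{Preservation of $I$ (the main step).} Let $\mathbf{a}\in G(I)$ with $a_i\ge 1$. I must show that $\mathbf{a}+\widehat{\mathbf{b}}-\mathrm{e}_i$ lies in every irreducible component $\mathfrak{m}^{\mathbf{b}_q}$. I would split into two cases.

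For a component $\mathbf{b}_q\neq\mathbf{b}$, the argument in the proof of \cref{proposition: empty}, which uses irredundancy, shows that $\XX^{\widehat{\mathbf{b}}}\in\mathfrak{m}^{\mathbf{b}_q}$. So there is some $k$ with $\widehat{b}_k\ge b_{qk}$. This $k$ cannot equal $i$, because $\widehat{b}_i=0<b_{qi}$. Hence the $k$-th coordinate of $\mathbf{a}+\alpha$ is at least $\widehat{b}_k\ge b_{qk}$, and membership follows.

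For the component $\mathbf{b}$ itself, I use $\mathbf{a}\in\supp(I)\subseteq\supp(\mathfrak{m}^{\mathbf{b}})$. There are two subcases.
\begin{enumerate}[(i)]
\item If $a_k\ge 1$ for some $k\neq i$, then the $k$-th coordinate of $\mathbf{a}+\alpha$ is $a_k+b_k-1\ge b_k$.
\item Otherwise $\mathbf{a}=a_i\mathrm{e}_i$. Since there are no linear generators, $a_i\ge 2$, and then the $i$-th coordinate of $\mathbf{a}+\alpha$ is $a_i-1\ge 1=b_i$.
\end{enumerate}

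This component-by-component verification is where the real content lies. Its key input is the irredundancy fact $\widehat{\mathbf{b}}\in\supp(\mathfrak{m}^{\mathbf{b}_q})$ for $q\neq$ the index of $\mathbf{b}$. Reducibility of $I$ does not seem to be used essentially; it only guarantees that other components exist, and the first case is then nonvacuous.
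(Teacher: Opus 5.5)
Your proof is correct and is essentially the paper's argument. It uses the same derivation $\XX^{\widehat{\mathbf{b}}}\,\partial/\partial x_i$, checks nonvanishing on $\overline{x}_i$, and obtains preservation of $I$ from irredundancy, via $\XX^{\widehat{\mathbf{b}}}\in\mathfrak{m}^{\mathbf{b}_q}$ through a coordinate outside $K'$. The only difference is bookkeeping: you verify each irreducible component separately, while the paper splits on whether $\mathbf{c}-\mathrm{e}_i$ meets $K'$ and, in the remaining case, invokes $E_{\widehat{\mathbf{b}}}=\emptyset$ from \cref{proposition: empty}, which combines the same two facts.
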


\begin{proof}
Without loss of generality, we may assume that there exists $1 \leq s < n$ such that $K = \{1,\ldots,s\}$. Then $\widehat{b}_j > 0$ for every $j \in K$. For $i \in K'$, consider the derivation
$$
\partial = x_1^{\widehat{b}_1} \cdots x_s^{\widehat{b}_s} \frac{d}{dx_i}.
$$
We need to verify that $\partial(I) \subset I$ and that the induced derivation $\overline{\partial}$ on $A$ is nonzero.

The derivation $\overline{\partial}$ is nonzero because $\overline{\partial}(\overline{x}_i) = \overline{x}_1^{\widehat{b}_1} \cdots \overline{x}_s^{\widehat{b}_s} \neq 0$, since $\widehat{\mathbf{b}} \notin \supp(I)$, as $\XX^{\widehat{\mathbf{b}}} \notin \mathfrak{m}^{\mathbf{b}} \supseteq I$.

It remains to show that $\partial(I) \subset I$. Let $\mathbf{c} \in G(I)$. If $c_i = 0$, then $\partial(\XX^{\mathbf{c}}) = 0$. If $c_i \geq 1$, then since $I$ has no linear generators, $\mathbf{c} \neq \mathrm{e}_i$ and $\partial(\XX^{\mathbf{c}}) = \lambda\, x_1^{\widehat{b}_1} \cdots x_s^{\widehat{b}_s}\, \XX^{\mathbf{c} - \mathrm{e}_i}$ for some $\lambda \in \KK^*$. We claim that this monomial belongs to $I$.

For each $\mathbf{b}_q \in \mathcal{M}(I)$ with $\mathbf{b}_q \neq \mathbf{b}$, there exists $j_q \in K$ such that $b_{q,j_q} \leq \widehat{b}_{j_q} < b_{j_q}$; otherwise $b_{q,j} \geq b_j$ for every $j \in K$ and $b_{q,j} \geq 1 = b_j$ for every $j \in K'$, giving $\mathfrak{m}^{\mathbf{b}_q} \subseteq \mathfrak{m}^{\mathbf{b}}$, contradicting irredundancy. If $\mathbf{c} - \mathrm{e}_i$ has a nonzero component in some coordinate $s' \in K'$, then $x_1^{\widehat{b}_1} \cdots x_s^{\widehat{b}_s}\, \XX^{\mathbf{c} - \mathrm{e}_i}$ lies in $\mathfrak{m}^{\mathbf{b}}$ (since $b_{s'} = 1$) and in each $\mathfrak{m}^{\mathbf{b}_q}$ (since it is divisible by $x_{j_q}^{b_{q,j_q}}$), so it belongs to $I$. Otherwise, $\mathbf{c} - \mathrm{e}_i$ is supported on $K$ and is nonzero (since $I$ has no linear generators), so there exists $j_0 \in K$ with $c_{j_0} \geq 1$. Then the exponent of $x_1^{\widehat{b}_1} \cdots x_s^{\widehat{b}_s}\, \XX^{\mathbf{c} - \mathrm{e}_i}$ dominates $\widehat{\mathbf{b}} + \mathrm{e}_{j_0}$, which belongs to $\supp(I)$ since $E_{\widehat{\mathbf{b}}} = \emptyset$ by \cref{proposition: empty}. As $\supp(I)$ is closed under the partial order, the monomial $x_1^{\widehat{b}_1} \cdots x_s^{\widehat{b}_s}\, \XX^{\mathbf{c} - \mathrm{e}_i} \in I$.
\end{proof}

We illustrate \cref{lemma: outer} by revisiting \cref{example: running}.

\begin{example}[Continuation of \cref{example: running}]
For $\widehat{\mathbf{b}}_1 = (2,0)$, the minimal face of $\ZZ^2_{\geq 0}$ containing it is the $x_1$-axis, so $K = \{1\}$ and $K' = \{2\}$. \cref{lemma: outer} gives the outer root $\alpha = 2\mathrm{e}_1 - \mathrm{e}_2 = (2,-1)$, corresponding to the derivation $x_1^2 \frac{d}{dx_2}$. Similarly, for $\widehat{\mathbf{b}}_3 = (0,4)$, the minimal face is the $x_2$-axis, $K = \{2\}$ and $K' = \{1\}$, yielding the outer root $\alpha = 4\mathrm{e}_2 - \mathrm{e}_1 = (-1,4)$, corresponding to $x_2^4 \frac{d}{dx_1}$. For $\widehat{\mathbf{b}}_2 = (1,2)$, the minimal face is all of $\ZZ^2_{\geq 0}$, so $K = \{1,2\}$ and $K' = \emptyset$; in this case \cref{lemma: outer} produces no outer root.
\end{example}

\begin{remark}\label{remark: 0}
The hypothesis $F \neq \{0\}$ in \cref{lemma: outer} is automatically satisfied under our assumptions. Indeed, $F = \{0\}$ would force $\widehat{\mathbf{b}} = 0$, hence $\mathbf{b} = \sum_{i=1}^n \mathrm{e}_i$ and $\mathfrak{m}^{\mathbf{b}} = \mathfrak{m}$, which is excluded since $I$ has no linear generators.
\end{remark}

\section{Proof of the theorem and sharper bounds}

We start with an exact formula for $\dim \Aut^0_\KK(A)$, from which the theorem follows quickly. We then establish sharper bounds in the irreducible and reducible cases, and we characterize the cases in which these bounds are attained.

\subsection{An exact formula}

For $\mathbf{m} \in \ZZ^n_{\geq 0}$ we write $\nu(\mathbf{m}) = \#\{i \mid m_i \geq 1\}$ for the number of variables dividing $\XX^{\mathbf{m}}$, and we denote by $s(I)$ the number of outer Demazure roots of $I$.

\begin{lemma}\label{lemma: identity}
Let $I \subset \KK[\XX]$ be a monomial ideal such that $A = \KK[\XX]/I$ is finite-dimensional of dimension $\ell$. Then
$$
\dim \Aut^0_\KK(A) = \ell - 1 + s(I) + P(I), \qquad \text{where} \quad P(I) = \sum_{\mathbf{m} \in (\ZZ^n_{\geq 0} \setminus \supp(I)) \setminus \{0\}} \bigl(\nu(\mathbf{m}) - 1\bigr).
$$
\end{lemma}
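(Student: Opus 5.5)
We compute $\dim \Aut^0_\KK(A) = \dim \Der(A)$ from the weight decomposition $\Der(A) = \mathfrak{g}_0 \oplus \bigoplus_{\alpha \in \mathcal{R}(I)} \mathfrak{g}_\alpha$. \Cref{lemma:G_alpha_iso} gives $\dim \mathfrak{g}_\alpha = 1$ for outer roots, so these contribute exactly $s(I)$. Together with $\dim \mathfrak{g}_0 = n$, this yields
$$
\dim \Aut^0_\KK(A) = n + s(I) + \sum_{\alpha \in \ZZ^n_{\geq 0}\setminus\{0\}} |E_\alpha|.
$$
Here we may sum over all nonzero $\alpha \in \ZZ^n_{\geq 0}$, for the following reason. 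An inner $\alpha$ is a root exactly when $\mathfrak{g}_\alpha \neq 0$, that is, when $E_\alpha \neq \emptyset$ (by \cref{lemma:G_alpha_iso} and the injectivity argument there). Every other nonzero inner $\alpha$ has $E_\alpha = \emptyset$ and contributes nothing, and in particular every $\alpha \in \supp(I)$ has $E_\alpha=\emptyset$ because $\supp(I)$ is upward closed. So it remains to show that the inner sum $\sum_{\alpha \neq 0} |E_\alpha|$ equals $\ell - 1 - n + P(I)$.

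The key step is a double-counting argument. Let $\mathcal{S} = \ZZ^n_{\geq 0}\setminus \supp(I)$, so $|\mathcal{S}| = \ell$. Consider the set of pairs $(\alpha, i)$ with $\alpha \in \ZZ^n_{\geq 0}$, $\alpha\neq 0$, and $\alpha + \mathrm{e}_i \in \mathcal{S}$. Reindex each pair by $\mathbf{m} = \alpha + \mathrm{e}_i \in \mathcal{S}$. For fixed $\mathbf{m}$, the admissible indices are those $i$ with $m_i \geq 1$ (so that $\alpha = \mathbf{m}-\mathrm{e}_i \geq 0$) and $\mathbf{m} \neq \mathrm{e}_i$ (so that $\alpha\neq 0$). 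The point $\mathbf{m}=0$ contributes nothing. For $\mathbf{m}\neq 0$ the count is $\nu(\mathbf{m})$, minus $1$ when $\mathbf{m}$ is a unit vector $\mathrm{e}_i$. Each $\mathrm{e}_i$ lies in $\mathcal{S}$ because there are no linear generators. Hence
$$
\sum_{\alpha\neq 0}|E_\alpha| = \sum_{\mathbf{m}\in\mathcal{S}\setminus\{0\}} \nu(\mathbf{m}) - n = \sum_{\mathbf{m}\in\mathcal{S}\setminus\{0\}}\bigl(\nu(\mathbf{m})-1\bigr) + (\ell-1) - n = P(I) + \ell - 1 - n.
$$
Adding $n + s(I)$ gives the stated formula.

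The main point requiring care is the bookkeeping at $\alpha = 0$. The weight $\alpha=0$ must be excluded from the inner sum because $\mathfrak{g}_0$ is counted separately as the torus, with dimension $n$. This exclusion removes exactly the $n$ pairs $(0,i)$ with $\mathbf{m} = \mathrm{e}_i$, and that removal cancels against $\dim\mathfrak{g}_0 = n$. One should also check that the decomposition of $\Der(A)$ into weight spaces is a finite direct sum. This holds because $A$ is finite-dimensional, so only finitely many degrees $\alpha$ can give nonzero derivations; in the formula this is visible from the fact that $E_\alpha\neq\emptyset$ forces $\alpha\in\mathcal{S}$, and outer roots are bounded by \eqref{eq: outer criterion}. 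Finally, since $\Aut^0_\KK(A)$ is a smooth algebraic group in characteristic zero, its dimension equals $\dim \Der(A)$.
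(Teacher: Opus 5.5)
Your proof is correct and follows the paper's argument. Both identify $\dim\mathfrak{g}_\alpha$ with $|E_\alpha|$ for inner $\alpha$, count the outer roots as $s(I)$, and double count the pairs $(\alpha,i)$ via $\mathbf{m}=\alpha+\mathrm{e}_i$; the paper simply includes $\alpha=0$ in the sum, using $|E_0|=n=\dim\mathfrak{g}_0$, which avoids your $-n$ correction. One side remark is misattributed: outer roots are not bounded by \eqref{eq: outer criterion}, a preservation criterion that larger $\alpha$ satisfy more easily, but by the nonvanishing condition $\mathrm{e}_j+\alpha\notin\supp(I)$ from the proof of \cref{lemma:G_alpha_iso}, or simply by $\dim\Der(A)<\infty$.
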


\begin{proof}
We claim that $\dim \mathfrak{g}_\alpha = |E_\alpha|$ for every $\alpha \in \ZZ^n_{\geq 0}$. For $\alpha = 0$ both sides equal $n$, since $I$ has no linear generators. For $\alpha \in \supp(I)$ both sides vanish. Now let $\alpha \in \ZZ^n_{\geq 0} \setminus \supp(I)$ with $\alpha \neq 0$. If $\alpha$ is a Demazure root, the claim is \cref{lemma:G_alpha_iso}. If $\alpha$ is not a Demazure root, then $\mathfrak{g}_\alpha = 0$, and also $E_\alpha = \emptyset$: otherwise, for $\mathrm{e}_i \in E_\alpha$, the inner derivation $\partial_{\alpha,\mathrm{e}_i^*}$, which preserves $I$, would induce a derivation of $A$ with $\overline{\partial}_{\alpha,\mathrm{e}_i^*}(\overline{x}_i) = \overline{\XX}^{\alpha + \mathrm{e}_i} \neq 0$, and $\alpha$ would be a Demazure root. Hence
$$
\dim \Aut^0_\KK(A) = \sum_{\alpha \in \ZZ^n_{\geq 0} \setminus \supp(I)} |E_\alpha| + s(I).
$$
The map $(\alpha, \mathrm{e}_i) \mapsto (\alpha + \mathrm{e}_i, i)$ is a bijection from $\{(\alpha, \mathrm{e}_i) \mid \alpha \in \ZZ^n_{\geq 0} \setminus \supp(I),\ \mathrm{e}_i \in E_\alpha\}$ onto the set of pairs $(\mathbf{m}, i)$ with $\mathbf{m} \in \ZZ^n_{\geq 0} \setminus \supp(I)$ and $m_i \geq 1$. Indeed, let $(\alpha, \mathrm{e}_i)$ be a pair with $\alpha \in \ZZ^n_{\geq 0} \setminus \supp(I)$ and $\mathrm{e}_i \in E_\alpha$. By definition of $E_\alpha$ we have $\alpha + \mathrm{e}_i \notin \supp(I)$, and the $i$-th component of $\alpha + \mathrm{e}_i$ is at least $1$, so the map is well defined. Conversely, let $(\mathbf{m}, i)$ be a pair with $\mathbf{m} \in \ZZ^n_{\geq 0} \setminus \supp(I)$ and $m_i \geq 1$. Then $\mathbf{m} - \mathrm{e}_i \in \ZZ^n_{\geq 0}$, and $\mathbf{m} - \mathrm{e}_i \notin \supp(I)$ because $\mathbf{m} - \mathrm{e}_i \leq \mathbf{m}$ and $\supp(I)$ is closed under the partial order. Moreover, $\mathrm{e}_i \in E_{\mathbf{m} - \mathrm{e}_i}$ because $(\mathbf{m} - \mathrm{e}_i) + \mathrm{e}_i = \mathbf{m} \notin \supp(I)$. Hence $(\mathbf{m}, i) \mapsto (\mathbf{m} - \mathrm{e}_i, \mathrm{e}_i)$ is the inverse map. Therefore
$$
\sum_{\alpha} |E_\alpha| = \sum_{\mathbf{m} \in \ZZ^n_{\geq 0} \setminus \supp(I)} \nu(\mathbf{m}) = (\ell - 1) + P(I),
$$
since exactly $\ell - 1$ elements of $\ZZ^n_{\geq 0} \setminus \supp(I)$ are different from $0$, and $\nu(0) = 0$.
\end{proof}

We can now prove the main result of this paper, stated in the introduction.

\begin{theorem}\label{theorem: main}
Let $A \cong \KK[\XX]/I$, where $I$ is a monomial ideal such that $A$ is finite-dimensional of dimension $\ell$. Then $\dim \Aut^0_\KK(A) \geq \ell - 1$. Moreover, equality holds if and only if $A \cong \KK[t]/(t^{\ell})$.
\end{theorem}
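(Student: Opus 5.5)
The plan is to read everything off the exact formula of \cref{lemma: identity}:
$$
\dim \Aut^0_\KK(A) = \ell - 1 + s(I) + P(I).
$$
Both extra terms are nonnegative. $s(I)$ counts outer Demazure roots. $P(I)$ is a sum of terms $\nu(\mathbf{m}) - 1$ over nonzero $\mathbf{m}$, and each such term satisfies $\nu(\mathbf{m}) \geq 1$. This gives the inequality $\dim \Aut^0_\KK(A) \geq \ell - 1$ immediately. It also reduces the equality case to characterizing when $s(I) = P(I) = 0$.

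For the ``if'' direction, take $A \cong \KK[t]/(t^\ell)$ with $n = 1$ and $I = (x_1^\ell)$. Every nonzero standard monomial is a power of $x_1$, so $\nu = 1$ and $P(I) = 0$. For $n = 1$ the set $\mathcal{R}_{\mathrm{e}_1^*}$ consists only of $-\mathrm{e}_1$. The derivation $d/dx_1$ does not preserve $(x_1^\ell)$ when $\ell \geq 2$, by criterion~\eqref{eq: outer criterion}, since $(\ell-1)\mathrm{e}_1 \notin \supp(I)$. Hence $s(I) = 0$. The case $\ell = 1$ is trivial, with $\dim = 0$.

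For the ``only if'' direction, assume equality, so $P(I) = 0$ and $s(I) = 0$. First, $P(I) = 0$ forces every nonzero standard monomial to be a pure power of a single variable. In particular $x_ix_j \in I$ for all $i \neq j$. The main step is to show that $s(I) = 0$ forces $n = 1$. Suppose $n \geq 2$. Since $x_ix_j \in I$ for $i \neq j$ and $x_i^{a_i} \in G(I)$ with $a_i \geq 2$, we get
$$
I = (x_1^{a_1}, \ldots, x_n^{a_n}) + (x_ix_j \mid i \neq j).
$$
This ideal is reducible for $n \geq 2$: its irreducible components are $\mathfrak{m}^{\mathbf{b}}$ with $\mathbf{b} = (a_k - 1)\mathrm{e}_k + \sum_{i}\mathrm{e}_i$, one for each $k$. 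So \cref{lemma: outer} applies. We take $\widehat{\mathbf{b}} = (a_1-1)\mathrm{e}_1$, whose minimal face is the $x_1$-axis, and we pick $i = 2 \in K'$. This produces the outer root $(a_1-1)\mathrm{e}_1 - \mathrm{e}_2 \in \mathcal{R}(I)$, so $s(I) \geq 1$, a contradiction.

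Alternatively, one can avoid the irreducible decomposition and check~\eqref{eq: outer criterion} directly for $\partial = x_1^{a_1-1}\, d/dx_2$. The generators involving $x_2$ are $x_2^{a_2}$ and $x_2x_j$. Applying $\partial$ to them gives $x_1^{a_1-1}x_2^{a_2-1}$, which lies in $I$ since $a_2 \geq 2$ and $a_1 \geq 2$, and gives $x_1^{a_1-1}x_j$, which lies in $I$ both for $j \neq 1$ and for $j = 1$. The induced derivation sends $x_2 \mapsto x_1^{a_1-1} \neq 0$ in $A$. Therefore $n = 1$, $I = (x_1^\ell)$, and $A \cong \KK[t]/(t^\ell)$.

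The only step I expect to need care is this construction of an outer root when $n \geq 2$. That step is where the argument uses the requirement $a_i \geq 2$, i.e. the assumption that $I$ has no linear generators.
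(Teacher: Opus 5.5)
Your proposal is correct and follows the paper's proof: the inequality comes from \cref{lemma: identity}, and in the equality case $P(I)=0$ forces $x_ix_j\in I$, after which the outer root $(a_1-1)\mathrm{e}_1-\mathrm{e}_2$ given by $x_1^{a_1-1}\,d/dx_2$ contradicts $s(I)=0$ (your ``alternative'' direct check is exactly the paper's argument). The differences are cosmetic. You verify the ``if'' direction by computing $s(I)=P(I)=0$ for $(x_1^{\ell})$ directly, whereas the paper cites the structure of $\Aut^0_\KK(\KK[t]/(t^{\ell}))$. Your first route, via the explicit irredundant decomposition $\bigcap_k \mathfrak{m}^{(a_k-1)\mathrm{e}_k+\sum_i \mathrm{e}_i}$ and \cref{lemma: outer}, is also valid.
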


\begin{proof}
If $\ell = 1$, then $A \cong \KK \cong \KK[t]/(t)$ and $\dim \Aut^0_\KK(A) = 0 = \ell - 1$, so the result holds. We may therefore assume $\ell \geq 2$. By \cref{lemma: identity}, $\dim \Aut^0_\KK(A) = \ell - 1 + s(I) + P(I) \geq \ell - 1$, which proves the inequality.

For the characterization of equality, we first observe that for $\ell \geq 2$,
$$
\Aut^0_\KK\!\bigl(\KK[t]/(t^{\ell})\bigr) \cong
\begin{cases}
\GM, & \ell = 2, \\[4pt]
L_{\ell-2} \rtimes \GM, & \ell \geq 3,
\end{cases}
$$
where $L_{\ell-2}$ is the $(\ell-2)$-dimensional filiform unipotent group; see~\cite[Theorem~4.21 and Example~4.23]{DLMR24}. In both cases, $\dim \Aut^0_\KK(\KK[t]/(t^\ell)) = \ell - 1$.

Conversely, suppose that $\dim \Aut^0_\KK(A) = \ell - 1$. By \cref{lemma: identity}, $s(I) = P(I) = 0$. The condition $P(I) = 0$ means that $\nu(\mathbf{m}) \leq 1$ for every $\mathbf{m} \in \ZZ^n_{\geq 0} \setminus \supp(I)$, i.e., $x_ix_j \in I$ for all $i \neq j$. Assume that $n \geq 2$, and let $x_1^{a_1}, x_2^{a_2} \in G(I)$, with $a_1, a_2 \geq 2$. Then $G(I)$ consists of the pure powers $x_i^{a_i}$ together with the monomials $x_ix_j$ for $i \neq j$. Consider the derivation $\partial = x_1^{a_1 - 1} \frac{\partial}{\partial x_2}$. We have $\partial(x_2^{a_2}) = a_2 x_1^{a_1-1} x_2^{a_2-1} \in (x_1x_2) \subset I$, $\partial(x_1x_2) = x_1^{a_1} \in I$, and $\partial(x_2x_k) = x_1^{a_1-1}x_k \in (x_1x_k) \subset I$ for $k \geq 3$, while $\partial$ vanishes on the remaining generators. Hence $\partial(I) \subset I$. Moreover, the induced derivation maps $\overline{x}_2$ to $\overline{x}_1^{a_1-1} \neq 0$, so $(a_1 - 1)\mathrm{e}_1 - \mathrm{e}_2$ is an outer Demazure root of $I$, contradicting $s(I) = 0$. Therefore $n = 1$ and $A \cong \KK[t]/(t^\ell)$.
\end{proof}

The lower bound in \cref{theorem: main} depends only on $\ell$. A natural question is whether, for monomial algebras, one can obtain sharper lower bounds that reflect the interplay between the dimension $\ell$ of $A$ and the number $n$ of variables, which equals the embedding dimension $\dim_\KK \mathfrak{m}_A/\mathfrak{m}_A^2$ of $A$, where $\mathfrak{m}_A = \mathfrak{m}/I$ denotes the maximal ideal of $A$, since $I$ has no linear generators. We address this question in the rest of this section. We first treat the irreducible case, where we show that $\dim \Aut^0_\KK(A) \geq \ell - 1 + (n-1)$ and compute $\dim \Aut^0_\KK(A)$ explicitly. We then treat the reducible case, where we show that $\dim \Aut^0_\KK(A) \geq \ell - 1 + n$ and determine all the ideals attaining this bound.

\subsection{The irreducible case}

\begin{lemma}\label{lemma: cota}
Let $I = \mathfrak{m}^{\mathbf{b}}$ be an irreducible monomial ideal such that $A = \KK[\XX]/I$ is finite-dimensional of dimension $\ell$. Then
$$
\dim \Aut^0_\KK(A) \geq \ell - 1 + (n-1).
$$
In particular, $\dim \Aut^0_\KK(A) > \ell - 1$ when $n \geq 2$.
\end{lemma}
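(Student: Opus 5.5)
We will use the exact formula of \cref{lemma: identity}, $\dim \Aut^0_\KK(A) = \ell - 1 + s(I) + P(I)$, and show that $s(I) + P(I) \geq n-1$ when $I = \mathfrak{m}^{\mathbf{b}}$ with $\mathbf{b} \in \ZZ^n_{\geq 2}$. For this ideal the standard monomials are exactly the $\mathbf{m}$ with $0 \leq m_i \leq b_i - 1$ for all $i$, so the problem becomes counting on a box. We expect $P(I)$ alone to suffice when $n \geq 2$ (for $n=1$ the bound is $\ell - 1$, which is \cref{theorem: main}).

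The main step is to bound $P(I)$ from below. Every $\mathbf{m}$ in the box $\prod_i [0, b_i - 1]$ is standard, so $P(I) = \sum_{\mathbf{m}} (\nu(\mathbf{m}) - 1)$ over the nonzero points of the box. Points with $\nu(\mathbf{m}) = 1$ contribute $0$. Each point with $\nu(\mathbf{m}) = k \geq 2$ contributes $k-1 \geq 1$. In particular, the points $\mathrm{e}_1 + \mathrm{e}_i$ for $i = 2, \ldots, n$ lie in the box, since all $b_j \geq 2$, and each has $\nu = 2$. Hence $P(I) \geq n-1$. The point $\mathrm{e}_1 + \cdots + \mathrm{e}_n$ alone already gives $n - 1$, so either choice works.

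As a complementary count, useful for the exact computation and the later equality remark, the outer roots can be described as follows. For each $i$ and each $j \neq i$, $\alpha = (b_j - 1)\mathrm{e}_j - \mathrm{e}_i$ should be an outer Demazure root by the criterion~\eqref{eq: outer criterion}. The generators with $c_i \geq 1$ are only $x_i^{b_i}$, and $\mathbf{c} + \alpha$ has $i$-th coordinate $b_i - 1 \geq 1$ and $j$-th coordinate $b_j - 1$. This lands in $\supp(I)$ only if $b_j - 1 \geq b_j$, which fails. So a more careful check is needed, and roots of the form $\mathbf{a}' - \mathrm{e}_i$ with $\mathbf{a}'$ supported off $i$ must satisfy $x_i^{b_i - 1}\XX^{\mathbf{a}'} \in I$. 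This forces $a'_j \geq b_j$ for some $j$, but then the root acts by zero. Hence $s(I) = 0$ for irreducible $I$ with $n \geq 2$, and $P(I)$ carries the whole bound.

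The only potential obstacle is this bookkeeping. The inequality itself follows immediately from the $P(I)$ count, so the proof reduces to exhibiting $n-1$ units of excess in $P(I)$. The closing claim, $\dim \Aut^0_\KK(A) > \ell - 1$ for $n \geq 2$, then follows because $n - 1 \geq 1$.
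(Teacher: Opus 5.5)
Your argument is correct and is essentially the paper's: both apply \cref{lemma: identity} and bound $P(I)$ from below using the points $\mathrm{e}_i+\mathrm{e}_j$ of the box $\prod_i[0,b_i-1]$. The paper counts all $\binom{n}{2}$ such points and you use only the $n-1$ with $i=1$; the other difference is that you observe $s(I)\geq 0$ already suffices, whereas the paper first proves $s(I)=0$ by the same argument you eventually reach in your third paragraph. That paragraph is not needed for the inequality, and its opening claim that $(b_j-1)\mathrm{e}_j-\mathrm{e}_i$ should be a root, which you then refute yourself, should be deleted.
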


\begin{proof}
Since $I = \mathfrak{m}^{\mathbf{b}}$ is irreducible with
$b_i \geq 2$ for every $i$, there are no outer Demazure
roots. Indeed, by~\cite[Proposition~1.4]{DLMR24}, an outer root
$\alpha$ with $\mathrm{e}_j^*(\alpha) = -1$ would require
$b_j \mathrm{e}_j + \alpha \in \supp(I)$, forcing
$\alpha_k \geq b_k$ for some $k \neq j$, but then
$\mathrm{e}_j + \alpha \in \supp(I)$ and the induced
derivation would be zero. Hence $s(I) = 0$, and \cref{lemma: identity} gives $\dim \Aut^0_\KK(A) = \ell - 1 + P(I)$. If $n = 1$, this already gives $\dim \Aut^0_\KK(A) \geq \ell - 1 = \ell - 1 + (n - 1)$. If $n \geq 2$, then $\mathrm{e}_i + \mathrm{e}_j \notin \supp(I)$ for every $i < j$, since its components are at most $1 < 2 \leq b_k$. Hence $P(I) \geq \binom{n}{2} \geq n - 1$, and so $\dim \Aut^0_\KK(A) \geq \ell - 1 + (n - 1)$.
\end{proof}

\begin{proposition}\label{remark: exact irreducible}
Let $I = \mathfrak{m}^{\mathbf{b}}$ with $\mathbf{b} \in \ZZ^n_{\geq 2}$, and let $\ell = b_1 \cdots b_n$. Then
$$
\dim \Aut^0_\KK(A) = \ell \sum_{i=1}^n \Bigl(1 - \frac{1}{b_i}\Bigr).
$$
In particular, the bound of \cref{lemma: cota} is attained if and only if $n = 1$ or $\mathbf{b} = (2,2)$.
\end{proposition}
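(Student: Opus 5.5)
By the proof of \cref{lemma: cota}, an irreducible ideal $I=\mathfrak{m}^{\mathbf{b}}$ with $\mathbf{b}\in\ZZ^n_{\geq 2}$ has $s(I)=0$. Hence \cref{lemma: identity} reduces the computation to
$$\dim \Aut^0_\KK(A)=\sum_{\mathbf{m}\in \ZZ^n_{\geq 0}\setminus\supp(I)}\nu(\mathbf{m}).$$
This is the identity $\dim\Aut^0_\KK(A)=\ell-1+P(I)$ rewritten, since $\nu(0)=0$.

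Here $\ZZ^n_{\geq 0}\setminus\supp(I)$ is the box $\prod_{i=1}^n\{0,\ldots,b_i-1\}$, which has $\ell=b_1\cdots b_n$ points. I will swap the order of summation, writing $\nu(\mathbf{m})=\sum_i [m_i\geq 1]$. For a fixed $i$, the number of box points with $m_i\geq 1$ is $(b_i-1)\prod_{k\neq i}b_k=\ell(1-1/b_i)$. Summing over $i$ gives the formula $\ell\sum_i(1-1/b_i)$.

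For the equality case, I set $D=\ell\sum_i(1-1/b_i)-(\ell-1)-(n-1)$ and ask when $D=0$.
\begin{itemize}
\item If $n=1$, then $D=b_1-1-(b_1-1)=0$, so the bound is attained.
\item If $n\geq 2$, note that each $b_i\geq 2$ gives $1-1/b_i\geq 1/2$. Hence $\ell\sum_i(1-1/b_i)\geq n\ell/2$, and so $D\geq \ell(n/2-1)-n+2=(n-2)(\ell/2-1)$.
\begin{itemize}
\item For $n\geq 3$ we have $\ell\geq 8$, so $D>0$.
\item For $n=2$, a direct computation gives $D=2b_1b_2-b_1-b_2-b_1b_2=(b_1-1)(b_2-1)-1$. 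This vanishes exactly when $b_1=b_2=2$.
\end{itemize}
\end{itemize}

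There is no real obstacle here. The only points to handle carefully are the vanishing $s(I)=0$, which was already established in the proof of \cref{lemma: cota}, and the elementary inequality in the case $n\geq 3$.
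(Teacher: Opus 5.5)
Your proposal is correct and follows essentially the same route as the paper: you use $s(I)=0$ together with \cref{lemma: identity}, count coordinate-by-coordinate over the box, and handle the equality case the same way, via $D\geq (n-2)(\ell/2-1)$ for $n\geq 3$ and the identity $D=(b_1-1)(b_2-1)-1$ for $n=2$. The one cosmetic difference is that you count points $\mathbf{m}$ of the box with $m_i\geq 1$, while the paper counts $\alpha$ with $\alpha_i\leq b_i-2$. These are the same count under the shift $\mathbf{m}=\alpha+\mathrm{e}_i$ already used in the proof of \cref{lemma: identity}.
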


\begin{proof}
Since there are no outer Demazure roots, the proof of \cref{lemma: identity} gives $\dim \Aut^0_\KK(A) = \sum_{\alpha} |E_\alpha|$, where $\alpha$ runs over $\ZZ^n_{\geq 0} \setminus \supp(I)$. This set consists of the $\alpha$ with $\alpha_i \leq b_i - 1$ for every $i$, and for such $\alpha$ we have $\mathrm{e}_i \in E_\alpha$ if and only if $\alpha_i \leq b_i - 2$. Reordering the sum according to the elements of $E_\alpha$, we obtain
$$
\sum_{\alpha} |E_\alpha| = \sum_{i=1}^n \#\bigl\{\alpha \in \ZZ^n_{\geq 0} \setminus \supp(I) \bigm| \mathrm{e}_i \in E_\alpha\bigr\},
$$
since each $\alpha$ is counted on the right-hand side exactly once for each $\mathrm{e}_i \in E_\alpha$. For a fixed $i$, the elements of the set on the right-hand side are exactly the $\alpha$ with $0 \leq \alpha_i \leq b_i - 2$ and $0 \leq \alpha_j \leq b_j - 1$ for every $j \neq i$, so this set has
$$
(b_i - 1)\prod_{j \neq i} b_j = \frac{b_i - 1}{b_i}\, b_1 \cdots b_n = \ell\,\Bigl(1 - \frac{1}{b_i}\Bigr)
$$
elements. Summing over $i$ gives the formula.

For the last statement, note that the bound of \cref{lemma: cota} is $\ell - 1 + (n - 1) = \ell + n - 2$. If $n = 1$, then $\ell = b_1$ and the formula gives $\ell\,(1 - 1/\ell) = \ell - 1$, so the bound is attained. If $n = 2$, the bound is $\ell = b_1b_2$, and the formula gives $\dim \Aut^0_\KK(A) = 2b_1b_2 - b_1 - b_2$. Hence $\dim \Aut^0_\KK(A) - \ell = b_1b_2 - b_1 - b_2 = (b_1 - 1)(b_2 - 1) - 1$, which vanishes if and only if $b_1 - 1 = b_2 - 1 = 1$, since $b_1, b_2 \geq 2$, that is, if and only if $\mathbf{b} = (2,2)$. If $n \geq 3$, then $1 - 1/b_i \geq 1/2$ for every $i$, since $b_i \geq 2$, so the formula gives $\dim \Aut^0_\KK(A) \geq n\ell/2$. Therefore
$$
\dim \Aut^0_\KK(A) - (\ell + n - 2) \geq \frac{n\ell}{2} - \ell - n + 2 = (n - 2)\Bigl(\frac{\ell}{2} - 1\Bigr) > 0,
$$
since $n \geq 3$ and $\ell = b_1 \cdots b_n \geq 2^n > 2$.
\end{proof}

\subsection{The reducible case}

The next lemma produces, from each interior $\widehat{\mathbf{b}} \in \widehat{\mathcal{M}}(I)$, an inner Demazure root with $\dim \mathfrak{g}_\alpha \geq 2$. This extra contribution will be the key to obtaining a strict bound in the reducible case.

\begin{lemma}\label{lemma: interior}
Let $I$ be a reducible monomial ideal and let
$\widehat{\mathbf{b}} \in \widehat{\mathcal{M}}(I)$ be interior,
i.e., with all components positive. Then there exists
$\alpha \in (\ZZ^n_{\geq 0} \setminus \supp(I)) \setminus
(\widehat{\mathcal{M}}(I) \cup \{0\})$ with
$\dim \mathfrak{g}_\alpha \geq 2$.
\end{lemma}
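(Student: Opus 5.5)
The plan is to build $\alpha$ by stepping down from $\widehat{\mathbf{b}}$ in two coordinate directions, so that two coordinate directions land back on points below $\widehat{\mathbf{b}}$, which are outside $\supp(I)$. Two earlier results do the bookkeeping. By \cref{lemma:G_alpha_iso} and the proof of \cref{lemma: identity}, $\dim \mathfrak{g}_\alpha = |E_\alpha|$ for every $\alpha \in \ZZ^n_{\geq 0} \setminus \supp(I)$. By \cref{proposition: empty}, any $\alpha \in \ZZ^n_{\geq 0}\setminus\supp(I)$ with $E_\alpha \neq \emptyset$ is automatically not in $\widehat{\mathcal{M}}(I)$. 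So it suffices to find a nonzero $\alpha \in \ZZ^n_{\geq 0} \setminus \supp(I)$ with $|E_\alpha| \geq 2$.

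First I note that $n \geq 2$, since every monomial ideal of $\KK[x_1]$ is $(x_1^a)$ and hence irreducible. Since $\widehat{\mathbf{b}}$ is interior, $\widehat{\mathbf{b}} \geq \sum_i \mathrm{e}_i$. The main candidate is
$$\alpha = \widehat{\mathbf{b}} - \mathrm{e}_1 - \mathrm{e}_2 \in \ZZ^n_{\geq 0}.$$
The points $\alpha$, $\alpha + \mathrm{e}_1$ and $\alpha + \mathrm{e}_2$ are all $\leq \widehat{\mathbf{b}}$. Since $\XX^{\widehat{\mathbf{b}}} \notin \mathfrak{m}^{\mathbf{b}} \supseteq I$ and $\supp(I)$ is closed upward, none of these three points lies in $\supp(I)$. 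Hence $\mathrm{e}_1, \mathrm{e}_2 \in E_\alpha$, so $|E_\alpha| \geq 2$.

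It remains to ensure $\alpha \neq 0$, which holds unless $\widehat{\mathbf{b}} = \mathrm{e}_1 + \mathrm{e}_2$. If $n \geq 3$, then $\alpha_3 = \widehat{b}_3 \geq 1$, so $\alpha \neq 0$. If $n = 2$ and $\widehat{\mathbf{b}} \neq (1,1)$, some coordinate of $\widehat{\mathbf{b}}$ is at least $2$, and again $\alpha \neq 0$.

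The exceptional case, which I expect to be the only real obstacle, is $n = 2$ with $\widehat{\mathbf{b}} = (1,1)$, i.e. $\mathbf{b} = (2,2)$. Here reducibility is used. There is another component $\mathbf{b}' \in \mathcal{M}(I)$, incomparable with $(2,2)$ by irredundancy. Up to swapping variables, $b'_1 = 1$, and then $b'_2 \geq 3$ (if $b'_2 \leq 2$ then $\mathbf{b}' \leq \mathbf{b}$, contradicting irredundancy). I take $\alpha = \mathrm{e}_2 = (0,1)$ and check the two required memberships directly:
\begin{itemize}
\item $\alpha + \mathrm{e}_1 = (1,1) \notin \mathfrak{m}^{(2,2)}$, hence $(1,1) \notin \supp(I)$; this also shows $\alpha \notin \supp(I)$;
\item $\alpha + \mathrm{e}_2 = (0,2) \notin \mathfrak{m}^{(1,b'_2)}$ because $b'_2 \geq 3$, hence $(0,2) \notin \supp(I)$.
\end{itemize}
So $E_\alpha = \{\mathrm{e}_1, \mathrm{e}_2\}$ and $\alpha \neq 0$, which finishes this case. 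In every case, \cref{proposition: empty} then gives $\alpha \notin \widehat{\mathcal{M}}(I)$, and \cref{lemma:G_alpha_iso} gives $\dim \mathfrak{g}_\alpha = |E_\alpha| \geq 2$.
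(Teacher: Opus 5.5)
Your proof is correct. For $n = 2$ it coincides with the paper's. There $\mathrm{e}_1 + \mathrm{e}_2 = \sum_{i=1}^n \mathrm{e}_i$, so your $\alpha$ is the paper's Case 1 choice, and your exceptional case $\mathbf{b} = (2,2)$ with $\alpha = \mathrm{e}_{i_0}$ is exactly its Case 2.

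The genuine difference is for $n \geq 3$. The paper takes $\alpha = \widehat{\mathbf{b}} - \sum_{i=1}^n \mathrm{e}_i$, which gives $|E_\alpha| = n$, and it still has to treat $\widehat{\mathbf{b}} = \sum_{i=1}^n \mathrm{e}_i$ separately. Stepping down in only two coordinates always gives a nonzero $\alpha$, so for $n \geq 3$ you need no case split and no reducibility. You also settle $\alpha \notin \widehat{\mathcal{M}}(I)$ uniformly via \cref{proposition: empty}, instead of the paper's direct irredundancy check in Case 1.

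What the paper's choice buys is the sharper value $\dim \mathfrak{g}_\alpha = n$ outside the exceptional case. Case B of \cref{theorem: equality} relies on this to exclude interior elements of $\widehat{\mathcal{M}}(I)$ when $n \geq 3$; your version of the lemma would not support that step without reverting to $\widehat{\mathbf{b}} - \sum_i \mathrm{e}_i$ there. The distinctness argument for the $\alpha_q$ in \cref{proposition: cota} would still go through with your construction. Your rule is again a translation, and for $n = 2$ the only exceptional point is the same as the paper's.
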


\begin{proof}
We distinguish two cases.

\smallskip
\noindent\textit{Case 1:}
$\widehat{\mathbf{b}} \neq \sum_{i=1}^n \mathrm{e}_i$. Set
$\alpha = \widehat{\mathbf{b}} - \sum_{i=1}^n \mathrm{e}_i$.
Since $\widehat{\mathbf{b}}$ is interior and
$\widehat{\mathbf{b}} \neq \sum_{i=1}^n \mathrm{e}_i$, we have
$\alpha \in \ZZ^n_{\geq 0} \setminus \{0\}$. Since
$\alpha \leq \widehat{\mathbf{b}}$ and
$\widehat{\mathbf{b}} \notin \supp(I)$, the upward closure of
$\supp(I)$ gives $\alpha \notin \supp(I)$. If
$\alpha = \widehat{\mathbf{b}}_k$ for some $k$, then
$\mathbf{b}_k = \mathbf{b} - \sum_{i=1}^n \mathrm{e}_i$, so
$\mathbf{b}_k < \mathbf{b}$ componentwise, giving
$\mathfrak{m}^{\mathbf{b}} \subset \mathfrak{m}^{\mathbf{b}_k}$,
contradicting irredundancy. Hence
$\alpha \notin \widehat{\mathcal{M}}(I)$.

We show $|E_\alpha| = n$. For each $1 \leq i \leq n$, the vector
$\alpha + \mathrm{e}_i = \widehat{\mathbf{b}} -
\sum_{j=1}^n \mathrm{e}_j + \mathrm{e}_i$ has $m$-th component
$\widehat{b}_m - 1 + \delta_{mi} \leq \widehat{b}_m < b_m$
for every $m$. Hence
$\alpha + \mathrm{e}_i \notin
\supp(\mathfrak{m}^{\mathbf{b}}) \supseteq \supp(I)$, so
$\mathrm{e}_i \in E_\alpha$. By \cref{lemma:G_alpha_iso},
$\dim \mathfrak{g}_\alpha = n \geq 2$.

\smallskip
\noindent\textit{Case 2:}
$\widehat{\mathbf{b}} = \sum_{i=1}^n \mathrm{e}_i$, i.e.,
$\mathbf{b} = 2\sum_{i=1}^n \mathrm{e}_i$. By irredundancy,
there exists $k$ such that
$\mathfrak{m}^{\mathbf{b}} \not\subset
\mathfrak{m}^{\mathbf{b}_k}$; otherwise
$\mathfrak{m}^{\mathbf{b}_k}$ would be redundant.
Since $b_m = 2$ for every $m$, this requires
$b_{k,i_0} > 2$ for some index $i_0$. Set
$\alpha = \mathrm{e}_{i_0}$. Since $I$ has no linear generators,
$\alpha \notin \supp(I)$. Since $|E_\alpha| \geq 1$ (shown below),
$\alpha \notin \widehat{\mathcal{M}}(I)$ by
\cref{proposition: empty}.

We show $|E_{\mathrm{e}_{i_0}}| \geq 2$. For each $j \neq i_0$,
the vector $\mathrm{e}_{i_0} + \mathrm{e}_j$ has all components
at most $1 < 2 = b_m$, so
$\mathrm{e}_{i_0} + \mathrm{e}_j \notin
\supp(\mathfrak{m}^{\mathbf{b}}) \supseteq \supp(I)$, giving
$\mathrm{e}_j \in E_{\mathrm{e}_{i_0}}$. Additionally,
$2\mathrm{e}_{i_0}$ has $i_0$-th component
$2 < b_{k,i_0}$ and all other components $0 < b_{km}$, so
$2\mathrm{e}_{i_0} \notin
\supp(\mathfrak{m}^{\mathbf{b}_k}) \supseteq \supp(I)$, giving
$\mathrm{e}_{i_0} \in E_{\mathrm{e}_{i_0}}$. Hence
$|E_{\mathrm{e}_{i_0}}| \geq 2$.
\end{proof}

\begin{remark}\label{remark: case 2 unique}
Note that $\widehat{\mathbf{b}} = \sum_{i=1}^n \mathrm{e}_i$ determines $\mathbf{b}$ uniquely, so Case 2 in the proof of \cref{lemma: interior} occurs for at most one element of $\widehat{\mathcal{M}}(I)$. This observation will be used in the proof of \cref{proposition: cota}.
\end{remark}

We can now establish the bound in the reducible case.

\begin{proposition}\label{proposition: cota}
Let $I$ be a reducible monomial ideal such that $A = \KK[\XX]/I$ is
finite-dimensional of dimension $\ell$. Then
$$
\dim \Aut^0_\KK(A) \geq \ell - 1 + n.
$$
In particular, $\dim \Aut^0_\KK(A) > \ell - 1$.
\end{proposition}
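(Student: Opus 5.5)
The plan is to rewrite the exact formula of \cref{lemma: identity} so that the target bound $\ell-1+n$ appears explicitly, and then show that the remaining correction term is nonnegative by matching each element of $\widehat{\mathcal{M}}(I)$ with a compensating contribution.

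\textbf{Step 1: rewriting the formula.} From the proof of \cref{lemma: identity} we have
\[
\dim \Aut^0_\KK(A)=\sum_{\alpha\in \ZZ^n_{\geq 0}\setminus\supp(I)}|E_\alpha| + s(I).
\]
The term $\alpha=0$ contributes $|E_0|=n$. The set $S=(\ZZ^n_{\geq 0}\setminus\supp(I))\setminus\{0\}$ has exactly $\ell-1$ elements. Hence
\[
\dim \Aut^0_\KK(A)=\ell-1+n+s(I)+\sum_{\alpha\in S}\bigl(|E_\alpha|-1\bigr),
\]
and it suffices to prove $s(I)+\sum_{\alpha\in S}(|E_\alpha|-1)\geq 0$.

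\textbf{Step 2: the negative terms.} By \cref{proposition: empty}, the only $\alpha\in S$ with $|E_\alpha|-1<0$ are the elements of $\widehat{\mathcal{M}}(I)$. By \cref{remark: 0} these all lie in $S$, and each contributes exactly $-1$. So it is enough to construct an injection from $\widehat{\mathcal{M}}(I)$ into a set of disjoint positive contributions, namely outer roots (each worth $1$) or elements $\alpha\in S\setminus\widehat{\mathcal{M}}(I)$ with $|E_\alpha|\geq 2$ (each worth at least $1$).

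\textbf{Step 3: the injection.} Split $\widehat{\mathcal{M}}(I)$ into boundary and interior elements.
\begin{itemize}
\item If $\widehat{\mathbf{b}}$ lies on a proper face, then $K'\neq\emptyset$. Choose one $i\in K'$; \cref{lemma: outer} gives the outer root $\alpha=\widehat{\mathbf{b}}-\mathrm{e}_i$. Its nonnegative part is $\widehat{\mathbf{b}}$, so distinct boundary $\widehat{\mathbf{b}}$ give distinct outer roots, and we obtain at least as many outer roots as boundary elements.
\item If $\widehat{\mathbf{b}}$ is interior, \cref{lemma: interior} gives $\alpha\in S\setminus\widehat{\mathcal{M}}(I)$ with $|E_\alpha|=\dim\mathfrak{g}_\alpha\geq 2$. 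In Case~1 this is $\alpha=\widehat{\mathbf{b}}-\sum_i\mathrm{e}_i$, which is injective in $\widehat{\mathbf{b}}$. Case~2 happens for at most one $\widehat{\mathbf{b}}$ by \cref{remark: case 2 unique}.
\end{itemize}

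\textbf{Main obstacle: injectivity between Case~2 and Case~1.} The delicate point is that the Case~2 element $\alpha=\mathrm{e}_{i_0}$ must not coincide with a Case~1 element $\widehat{\mathbf{b}}'-\sum_i\mathrm{e}_i$ coming from some other $\mathbf{b}'\in\mathcal{M}(I)$. Suppose it did. Then $\mathbf{b}'=\mathrm{e}_{i_0}+2\sum_i\mathrm{e}_i\geq 2\sum_i\mathrm{e}_i=\mathbf{b}$ componentwise. This gives $\mathfrak{m}^{\mathbf{b}'}\subseteq\mathfrak{m}^{\mathbf{b}}$, contradicting irredundancy. So all chosen $\alpha$ are distinct, and each has $|E_\alpha|-1\geq 1$.

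\textbf{Conclusion.} Summing, the positive contributions are at least $|\widehat{\mathcal{M}}(I)|$. This cancels the negative ones, giving $\dim\Aut^0_\KK(A)\geq \ell-1+n$. The final assertion $\dim\Aut^0_\KK(A)>\ell-1$ follows since $n\geq 1$.
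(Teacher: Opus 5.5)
Your proposal is correct and follows the paper's proof essentially step by step. It uses the same rewriting of the count as $\ell-1+n$ plus a slack term, and compensates each $\widehat{\mathbf{b}}\in\widehat{\mathcal{M}}(I)$ the same way: by an outer root when $\widehat{\mathbf{b}}$ lies on the boundary (via the outer-root lemma), or by an inner degree with $\dim\mathfrak{g}_\alpha\geq 2$ when it is interior (via the interior lemma), with the same irredundancy argument separating the Case~2 image from the Case~1 images. Your observation that the positive part of $\widehat{\mathbf{b}}-\mathrm{e}_i$ recovers $\widehat{\mathbf{b}}$ usefully makes explicit the injectivity behind the paper's inequality $s\geq r'$, and $n\geq 1$ indeed suffices for the final strict inequality.
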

\begin{proof}
Let $I = \mathfrak{m}^{\mathbf{b}_1} \cap \cdots \cap \mathfrak{m}^{\mathbf{b}_r}$ be the irredundant irreducible decomposition of $I$ with $r \geq 2$. We enumerate $\ZZ^n_{\geq 0} \setminus \supp(I) = \{0 = \alpha_1, \alpha_2, \ldots, \alpha_\ell\}$ and denote by $s$ the number of outer Demazure roots of $I$. Then
$$
\dim \Aut^0_\KK(A) = n + \sum_{i=2}^{\ell} \dim \mathfrak{g}_{\alpha_i} + s.
$$
By \cref{proposition: empty}, $\dim \mathfrak{g}_{\alpha_i} = 0$ if and only if $\alpha_i = \widehat{\mathbf{b}}_j$ for some $j$, and the remaining $\ell - 1 - r$ inner roots each satisfy $\dim \mathfrak{g}_{\alpha_i} \geq 1$. Among the $\widehat{\mathbf{b}}_j$, let $r'$ be the number lying on a proper face of $\ZZ^n_{\geq 0}$. By \cref{lemma: outer}, $s \geq r'$, so
$$
\dim \Aut^0_\KK(A) \geq n + r' + (\ell - 1 - r) = \ell - 1 + n - (r - r').
$$
The remaining $r - r'$ elements of $\widehat{\mathcal{M}}(I)$ are interior. By \cref{lemma: interior}, each interior $\widehat{\mathbf{b}}_q$ produces an element $\alpha_q \in (\ZZ^n_{\geq 0} \setminus \supp(I)) \setminus (\widehat{\mathcal{M}}(I) \cup \{0\})$ with $\dim \mathfrak{g}_{\alpha_q} \geq 2$. We claim that the $\alpha_q$ are pairwise different. The map $\widehat{\mathbf{b}}_q \mapsto \widehat{\mathbf{b}}_q - \sum_i \mathrm{e}_i$ is injective, so different interior $\widehat{\mathbf{b}}_q$ in Case 1 of \cref{lemma: interior} produce different $\alpha_q$. By \cref{remark: case 2 unique}, Case 2 occurs for at most one element of $\widehat{\mathcal{M}}(I)$. It remains to check that the Case 2 image $\mathrm{e}_{i_0}$ cannot equal a Case 1 image: if $\mathrm{e}_{i_0} = \widehat{\mathbf{b}}_{q'} - \sum_i \mathrm{e}_i$ for some $\widehat{\mathbf{b}}_{q'}$ in Case 1, then $\mathbf{b}_{q'} = 2 \sum_i \mathrm{e}_i + \mathrm{e}_{i_0} \geq \mathbf{b}_q$ componentwise, giving $\mathfrak{m}^{\mathbf{b}_{q'}} \subseteq \mathfrak{m}^{\mathbf{b}_q}$, contradicting irredundancy.

Let $\Omega$ be the set of elements of $(\ZZ^n_{\geq 0} \setminus \supp(I)) \setminus (\widehat{\mathcal{M}}(I) \cup \{0\})$ different from the $\alpha_q$, so that $|\Omega| = \ell - 1 - r - (r - r')$. Since the elements of $\widehat{\mathcal{M}}(I)$ contribute $0$, we have
$$
\dim \Aut^0_\KK(A) = n + s + \sum_{q} \dim \mathfrak{g}_{\alpha_q} + \sum_{\alpha \in \Omega} \dim \mathfrak{g}_\alpha,
$$
and therefore
\begin{equation}\label{eq: slack decomposition}
\dim \Aut^0_\KK(A) - (\ell - 1 + n) = (s - r') + \sum_{q} \bigl(\dim \mathfrak{g}_{\alpha_q} - 2\bigr) + \sum_{\alpha \in \Omega} \bigl(\dim \mathfrak{g}_\alpha - 1\bigr).
\end{equation}
Each summand on the right-hand side is nonnegative, so $\dim \Aut^0_\KK(A) \geq \ell - 1 + n$.
Since $I$ is reducible, $n \geq 2$, so in particular $\dim \Aut^0_\KK(A) > \ell - 1$.
\end{proof}

\begin{remark}\label{remark: sharp}
The bound $\ell - 1 + n$ is sharp. For $n = 2$ and $I = (\mathfrak{m})^2$, we have $\Aut_\KK(\KK[\XX]/I) \cong \GL_2(\KK)$, which has dimension $4$. In this case $\ell = \dim_\KK \KK[\XX]/I = 3$, so $\ell - 1 + n = 4$. Here $(\mathfrak{m})^2$ denotes the ordinary square of the maximal ideal, not an ideal of the form $\mathfrak{m}^{\mathbf{b}}$; indeed, the irredundant irreducible decomposition of $(\mathfrak{m})^2 = (x_1^2, x_1 x_2, x_2^2)$ is $\mathfrak{m}^{(2,1)} \cap \mathfrak{m}^{(1,2)}$. More generally, for $a, b \geq 2$ the ideal $I = (x_1^a,\, x_1x_2,\, x_2^b) = \mathfrak{m}^{(a,1)} \cap \mathfrak{m}^{(1,b)}$ has $\ell = a + b - 1$, and its Demazure roots are $k\mathrm{e}_1$ for $1 \leq k \leq a-2$, $k\mathrm{e}_2$ for $1 \leq k \leq b-2$, each with $\dim \mathfrak{g}_\alpha = 1$, together with the two outer roots $(a-1)\mathrm{e}_1 - \mathrm{e}_2$ and $(b-1)\mathrm{e}_2 - \mathrm{e}_1$. Hence $\dim \Aut^0_\KK(A) = 2 + (a-2) + (b-2) + 2 = \ell - 1 + n$.
\end{remark}

\begin{lemma}\label{lemma: two variables}
Let $I \subset \KK[x_1,x_2]$ be a reducible monomial ideal such that $A = \KK[x_1,x_2]/I$ is finite-dimensional, and let $x_1^{a_1}, x_2^{a_2} \in G(I)$. Then $(a_1 - 1)\mathrm{e}_1 - \mathrm{e}_2$ and $(a_2 - 1)\mathrm{e}_2 - \mathrm{e}_1$ are outer Demazure roots of $I$. In particular, $s(I) \geq 2$.
\end{lemma}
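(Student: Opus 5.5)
By symmetry it suffices to treat $\alpha = (a_1-1)\mathrm{e}_1 - \mathrm{e}_2 \in \mathcal{R}_{\mathrm{e}_2^*}$, i.e.\ the derivation $\partial = x_1^{a_1-1}\frac{\partial}{\partial x_2}$. I will check two things: that $\partial$ preserves $I$, using criterion~\eqref{eq: outer criterion}, and that the induced derivation is nonzero. The roots are then distinct, one lying in $\mathcal{R}_{\mathrm{e}_2^*}(I)$ and the other in $\mathcal{R}_{\mathrm{e}_1^*}(I)$. This gives $s(I)\geq 2$.

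\emph{Nonvanishing.} We have $\overline{\partial}(\overline{x}_2) = \overline{x}_1^{a_1-1}$, and this is nonzero because $x_1^{a_1}$ is the minimal pure power of $x_1$ in $I$.

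\emph{Preservation of $I$.} By~\eqref{eq: outer criterion}, we must show that $\mathbf{c} + \alpha \in \supp(I)$ for every $\mathbf{c} = (c_1,c_2) \in G(I)$ with $c_2 \geq 1$. Concretely, this says $x_1^{c_1+a_1-1}x_2^{c_2-1} \in I$. There are two cases.
\begin{enumerate}[(i)]
\item If $c_1 \geq 1$, then $c_1 + a_1 - 1 \geq a_1$, so the monomial is divisible by $x_1^{a_1}$ and lies in $I$.
\item If $c_1 = 0$, then $\mathbf{c} = a_2\mathrm{e}_2$, and we need $x_1^{a_1-1}x_2^{a_2-1} \in I$.
\end{enumerate}

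\emph{Main step: case (ii).} Here reducibility is essential. I argue by contradiction and suppose $x_1^{a_1-1}x_2^{a_2-1}\notin I$. Every monomial $x_1^{u}x_2^{v}$ with $u\le a_1-1$ and $v\le a_2-1$ divides it, so none of these lies in $I$. On the other hand, $x_1^{a_1}$ and $x_2^{a_2}$ lie in $I$. Therefore $\ZZ^2_{\geq 0}\setminus\supp(I)$ is exactly the box $[0,a_1-1]\times[0,a_2-1]$, which means $I = (x_1^{a_1},x_2^{a_2}) = \mathfrak{m}^{(a_1,a_2)}$. This ideal is irreducible, a contradiction.

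Alternatively, case (ii) follows from \cref{lemma: outer}, since the decomposition has a component $\mathbf{b}$ with $b_2=1$ and $b_1 = a_1$, giving $\widehat{\mathbf{b}}=(a_1-1,0)$. I would nonetheless use the direct argument above, because the $\widehat{\mathbf{b}}$ coming from the component with $b_2 = 1$ need not literally satisfy $b_1 = a_1$ without further checking.
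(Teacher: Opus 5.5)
Your proof is correct and follows the paper's argument essentially step for step. It uses the outer criterion~\eqref{eq: outer criterion} with the split into $c_1 \geq 1$ and $\mathbf{c} = a_2\mathrm{e}_2$, the box argument showing that $(a_1-1,a_2-1) \notin \supp(I)$ would force $I = \mathfrak{m}^{(a_1,a_2)}$, and nonvanishing via $\overline{x}_2 \mapsto \overline{x}_1^{a_1-1}$.

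You were right to discard the alternative via \cref{lemma: outer}, but for a stronger reason than the one you give. A component with $b_2 = 1$ need not exist at all: for example $I = (x_1^3, x_1x_2^2, x_2^3) = \mathfrak{m}^{(3,2)} \cap \mathfrak{m}^{(1,3)}$, where $2\mathrm{e}_1 - \mathrm{e}_2$ is an outer root that that lemma does not produce.
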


\begin{proof}
We show that $\alpha = (a_1 - 1)\mathrm{e}_1 - \mathrm{e}_2$ belongs to $\mathcal{R}_{\mathrm{e}_2^*}(I)$; the other claim follows by symmetry. By criterion~\eqref{eq: outer criterion}, we must check that $\mathbf{c} + \alpha \in \supp(I)$ for every $\mathbf{c} \in G(I)$ with $c_2 \geq 1$. If $c_1 \geq 1$, then the first component of $\mathbf{c} + \alpha$ is $c_1 + a_1 - 1 \geq a_1$, so $\mathbf{c} + \alpha \in \supp(I)$. Otherwise $\mathbf{c} = a_2 \mathrm{e}_2$ and $\mathbf{c} + \alpha = (a_1 - 1, a_2 - 1)$. If this vector were not in $\supp(I)$, then every $\mathbf{m}$ with $m_i \leq a_i - 1$ would satisfy $\mathbf{m} \notin \supp(I)$, so that $I = \mathfrak{m}^{(a_1, a_2)}$ would be irreducible, a contradiction. Finally, $\mathrm{e}_2 + \alpha = (a_1 - 1)\mathrm{e}_1 \notin \supp(I)$, so the induced derivation is nonzero.
\end{proof}

\begin{lemma}\label{lemma: matching}
Let $\Gamma$ be a graph on $\{1, \ldots, n\}$ in which every vertex has degree at most $1$. Let $p$ be the number of edges of $\Gamma$, let $\sigma = n - 2p$ be the number of isolated vertices, and let $a_k \geq 2$ for each isolated vertex $k$. Consider the monomial ideal
$$
I = \bigl(x_k^{a_k} \mid k \text{ isolated}\bigr) + \bigl(x_i^2 \mid i \text{ matched}\bigr) + \bigl(x_ux_v \mid u \neq v,\ \{u,v\} \text{ not an edge}\bigr).
$$
Then $\ZZ^n_{\geq 0} \setminus \supp(I)$ consists exactly of $0$; $c\,\mathrm{e}_k$ with $1 \leq c \leq a_k - 1$ for $k$ isolated; and $\mathrm{e}_i$, $\mathrm{e}_j$, $\mathrm{e}_i + \mathrm{e}_j$ for each edge $\{i,j\}$. Moreover, $P(I) = p$ and $s(I) = n(\sigma + p - 1)$.
\end{lemma}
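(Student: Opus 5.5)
The plan is to verify the three assertions in order: first the explicit description of $\ZZ^n_{\geq 0} \setminus \supp(I)$, then $P(I)$ directly from that description, and finally $s(I)$ using the outer criterion~\eqref{eq: outer criterion}.

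\emph{The complement of $\supp(I)$.} Let $\mathbf{m} \notin \supp(I)$ with $\nu(\mathbf{m}) \geq 2$. Since $x_ux_v \in I$ whenever $\{u,v\}$ is not an edge, the support of $\mathbf{m}$ must be an edge $\{i,j\}$. Since $x_i^2, x_j^2 \in I$, this forces $\mathbf{m} = \mathrm{e}_i + \mathrm{e}_j$. Now let $\nu(\mathbf{m}) = 1$, so $\mathbf{m} = c\,\mathrm{e}_k$. Then $c \leq a_k - 1$ if $k$ is isolated, and $c = 1$ if $k$ is matched. Conversely, each listed vector visibly avoids every generator. Since $\nu(\mathbf{m}) - 1$ equals $1$ exactly for the vectors $\mathrm{e}_i + \mathrm{e}_j$ and equals $0$ for every other nonzero listed vector, we get $P(I) = p$.

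\emph{Counting outer roots, setup.} Fix $j$ and write an element of $\mathcal{R}_{\mathrm{e}_j^*}$ as $\alpha = \beta - \mathrm{e}_j$ with $\beta \geq 0$ and $\beta_j = 0$.
\begin{itemize}
\item By the injectivity argument in \cref{lemma:G_alpha_iso}, the induced derivation is nonzero exactly when $\beta = \mathrm{e}_j + \alpha \notin \supp(I)$. So $\beta$ ranges over the listed vectors with $\beta_j = 0$.
\item The case $\beta = 0$ is excluded, since $\frac{\partial}{\partial x_j}$ sends the pure power of $x_j$ in $G(I)$ outside $I$.
\item The generators with $c_j \geq 1$ are the pure power $x_j^{a_j}$ (with $a_j = 2$ if $j$ is matched) and the monomials $x_jx_u$ for $u \neq j$ not adjacent to $j$.
\end{itemize}
So criterion~\eqref{eq: outer criterion} reduces to two conditions:
\[
(a_j - 1)\mathrm{e}_j + \beta \in \supp(I) \quad\text{and}\quad \beta + \mathrm{e}_u \in \supp(I) \ \text{ for all } u \notin \{j, j'\},
\]
where $j'$ denotes the partner of $j$ if it exists.

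\emph{Analysing the two conditions.}
\begin{itemize}
\item Since $a_j - 1 \geq 1$ and $\beta \neq 0$, the first condition fails only when $\beta + \mathrm{e}_j \notin \supp(I)$. By the description above this happens only for $\beta = \mathrm{e}_{j'}$, so the first condition exactly excludes $\beta = \mathrm{e}_{j'}$.
\item The second condition says that $\beta$ is maximal in the complement in every direction $u \neq j, j'$.
\item Checking the listed vectors, the survivors are $(a_k - 1)\mathrm{e}_k$ for isolated $k \neq j$, and $\mathrm{e}_i + \mathrm{e}_{i'}$ for edges $\{i,i'\}$ not containing $j$.
\item Everything else fails. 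This covers $c\,\mathrm{e}_k$ with $c < a_k - 1$, and $\mathrm{e}_i$ with $i$ matched to some $i' \neq j$, because adding $\mathrm{e}_{i'}$ stays outside $\supp(I)$. The case $\beta = \mathrm{e}_{j'}$ was already excluded.
\end{itemize}

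\emph{Conclusion, and the main obstacle.} If $j$ is isolated, the count of roots in $\mathcal{R}_{\mathrm{e}_j^*}(I)$ is $(\sigma - 1) + p$. If $j$ is matched, it is $\sigma + (p - 1)$. In both cases the count is $\sigma + p - 1$, so summing over $j$ gives $s(I) = n(\sigma + p - 1)$. The main obstacle is this last case analysis: one must make sure the exclusion of $\mathrm{e}_{j'}$ and the maximality condition interact correctly for matched $j$, so that the total is $\sigma + p - 1$ uniformly in $j$.
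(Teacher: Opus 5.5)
Your proposal is correct and follows essentially the same route as the paper: describe the complement of $\supp(I)$, read off $P(I)=p$, then apply the outer criterion to the generators divisible by $x_j$ and count the surviving $\beta$ for isolated and matched $j$. The only difference is organizational. You treat the pure-power condition uniformly, showing it excludes exactly $\beta=\mathrm{e}_{j'}$, whereas the paper splits into the isolated and matched cases from the start.
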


\begin{proof}
A monomial not in $I$ is divisible by at most two variables, and if it is divisible by $x_u$ and $x_v$ with $u \neq v$, then $\{u,v\}$ is an edge. Since $x_i^2 \in I$ for every matched vertex $i$ and $x_k^{a_k} \in I$ for every isolated vertex $k$, this gives the description of $\ZZ^n_{\geq 0} \setminus \supp(I)$. Among these elements, exactly the $p$ elements $\mathrm{e}_i + \mathrm{e}_j$ have $\nu = 2$, and the others have $\nu \leq 1$, so $P(I) = p$.

We now compute $s(I)$. By criterion~\eqref{eq: outer criterion}, the elements of $\mathcal{R}_{\mathrm{e}_j^*}(I)$ are the $\alpha = \mathbf{m} - \mathrm{e}_j$ with $\mathbf{m} \notin \supp(I)$, $m_j = 0$, and $\mathbf{m} + \mathbf{c} - \mathrm{e}_j \in \supp(I)$ for every $\mathbf{c} \in G(I)$ with $c_j \geq 1$.

If $j$ is isolated, the generators involving $x_j$ are $x_j^{a_j}$ and $x_jx_v$ for $v \neq j$, so the conditions read $\mathbf{m} + (a_j - 1)\mathrm{e}_j \in \supp(I)$ and $\mathbf{m} + \mathrm{e}_v \in \supp(I)$ for all $v \neq j$. The candidates $\mathbf{m} = 0$ and $\mathbf{m} = \mathrm{e}_i$ with $i$ matched fail, since $\mathrm{e}_v \notin \supp(I)$ and $\mathrm{e}_i + \mathrm{e}_{i'} \notin \supp(I)$ for the edge $\{i, i'\}$. The candidates $\mathbf{m} = c\,\mathrm{e}_k$ with $k \neq j$ isolated satisfy the conditions if and only if $c = a_k - 1$, and the candidates $\mathbf{m} = \mathrm{e}_i + \mathrm{e}_{i'}$ for an edge $\{i, i'\}$ always satisfy them. Hence $|\mathcal{R}_{\mathrm{e}_j^*}(I)| = (\sigma - 1) + p$.

If $j$ is matched with $j'$, the generators involving $x_j$ are $x_j^2$ and $x_jx_v$ for $v \notin \{j, j'\}$, so the conditions read $\mathbf{m} + \mathrm{e}_j \in \supp(I)$ and $\mathbf{m} + \mathrm{e}_v \in \supp(I)$ for all $v \notin \{j, j'\}$. The candidates $\mathbf{m} = 0$, $\mathbf{m} = \mathrm{e}_{j'}$ and $\mathbf{m} = \mathrm{e}_i$ with $i$ matched, $i \neq j'$, fail as before. The candidates $\mathbf{m} = c\,\mathrm{e}_k$ with $k$ isolated work if and only if $c = a_k - 1$, and the candidates $\mathbf{m} = \mathrm{e}_i + \mathrm{e}_{i'}$ for an edge $\{i,i'\} \neq \{j, j'\}$ always work. Hence $|\mathcal{R}_{\mathrm{e}_j^*}(I)| = \sigma + (p - 1)$.

Summing over the $\sigma$ isolated and $2p$ matched vertices, we get $s(I) = \sigma(\sigma - 1 + p) + 2p(\sigma + p - 1) = n(\sigma + p - 1)$.
\end{proof}

We now determine when the bound of \cref{proposition: cota} is attained.

\begin{theorem}\label{theorem: equality}
Let $I \subset \KK[\XX]$ be a reducible monomial ideal such that $A = \KK[\XX]/I$ is finite-dimensional of dimension $\ell$.
\begin{enumerate}[(i)]
\item $\dim \Aut^0_\KK(A) = \ell - 1 + n$ if and only if $n = 2$ and $I = (x_1^{a}, x_1x_2, x_2^{b})$ for some $a, b \geq 2$.
\item If $n \geq 3$, then $\dim \Aut^0_\KK(A) \geq \ell + n$. Moreover, for $n = 3$ equality holds for $I = (x_1^{a}, x_1x_2, x_1x_3, x_2^2, x_3^2)$ with $a \geq 2$.
\end{enumerate}
\end{theorem}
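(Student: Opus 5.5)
The plan is to work throughout with the exact formula of \cref{lemma: identity}. It says that $\dim \Aut^0_\KK(A) = \ell - 1 + s(I) + P(I)$. So (i) is equivalent to: $s(I)+P(I)=n$ holds if and only if $n=2$ and $I=(x_1^a,x_1x_2,x_2^b)$. The first claim of (ii) becomes $s(I)+P(I)\geq n+1$ for $n\geq 3$. The ``if'' direction of (i) is exactly \cref{remark: sharp}. For the example in (ii), apply \cref{lemma: matching} to the graph on $\{1,2,3\}$ with the single edge $\{2,3\}$ and $1$ isolated, where $a_1=a$. This gives $p=\sigma=1$, $P(I)=1$ and $s(I)=3(1+1-1)=3$, so $s+P=4=n+1$.

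The core step is a lower bound for $s(I)$, one direction at a time. It generalizes \cref{lemma: two variables}. Fix $j$ and let $x_j^{a_j}\in G(I)$. Take $\mathbf{m}\notin\supp(I)$ with $m_j=0$ that is maximal in the face $\{m_j=0\}$, meaning $\mathbf{m}+\mathrm{e}_v\in\supp(I)$ for all $v\neq j$. Set $\alpha=\mathbf{m}-\mathrm{e}_j$ and test criterion~\eqref{eq: outer criterion}. Any generator $\mathbf{c}$ with $c_j\geq1$ and some $c_v\geq1$ for $v\neq j$ gives $\mathbf{c}+\alpha\geq \mathbf{m}+\mathrm{e}_v\in\supp(I)$. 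So the only condition left is $\mathbf{m}+(a_j-1)\mathrm{e}_j\in\supp(I)$. The resulting derivation is nonzero because $\mathrm{e}_j+\alpha=\mathbf{m}\notin\supp(I)$. Hence $\mathcal{R}_{\mathrm{e}_j^*}(I)\neq\emptyset$ unless every maximal $\mathbf{m}$ of the face satisfies $\mathbf{m}+(a_j-1)\mathrm{e}_j\notin\supp(I)$. In that exceptional case the standard set is a product, i.e. $I=I'\KK[\XX]+(x_j^{a_j})$ with $I'$ a monomial ideal in the other variables. I will call $j$ a \emph{cylinder direction} in that case.

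\emph{Case with no cylinder direction.} Then $s(I)\geq n$. If $P(I)\geq 1$ we already get $s+P\geq n+1$. If $P(I)=0$, then $x_ix_k\in I$ for all $i\neq k$, so $I=(x_i^{a_i},\,x_ix_k)$. This is \cref{lemma: matching} with no edges, which gives $s(I)=n(n-1)$ and $P(I)=0$. Hence $s+P=n$ exactly when $n=2$, which is the ideal of (i); for $n\geq3$ we get $s+P=n(n-1)>n$.

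\emph{Case with a cylinder direction $j$.} Here $I'$ lives in $n-1\geq1$ variables, and it must be reducible because $I$ is reducible. In particular $n\geq3$, and $I'$ has standard dimension $\ell'\geq 3$. Every nonzero standard $\mathbf{m}'$ of $I'$ gives points $\mathbf{m}'+c\,\mathrm{e}_j$, $1\leq c\leq a_j-1$, with $\nu\geq2$. So $P(I)\geq P(I')+(\ell'-1)(a_j-1)\geq P(I')+2$. I would then argue by induction on $n$. Outer roots of $I'$ extend to outer roots of $I$, since the derivations $\partial_{\alpha,\lambda\mathrm{e}_k^*}$ do not involve $x_j$. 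This should give $s(I)\geq s(I')$, and in fact $s(I)\geq s(I')+1$ using $\mathcal{R}_{\mathrm{e}_j^*}$-independent roots such as $\mathbf{m}'-\mathrm{e}_k$ multiplied by $x_j^{a_j-1}$. Combining with the induction hypothesis $s(I')+P(I')\geq n-1$ should give $s+P\geq n+1$.

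The main obstacle is this cylinder case. One must make sure that the extension of outer roots from $I'$ to $I$ is injective and really preserves $I$, and that the extra $\nu\geq2$ points are enough to gain two. If the inductive bookkeeping becomes messy, I would fall back on a direct count: since $I'$ is reducible, $P(I)\geq \ell'-1\geq 2$ already, and \cref{lemma: outer} produces outer roots from the boundary corners of $I'$, all lying in directions $k\neq j$.
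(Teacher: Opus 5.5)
Your proof is correct, but it takes a different route from the paper's.

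\textbf{The paper's route.} For $n\geq 3$ the paper assumes $\dim \Aut^0_\KK(A)=\ell-1+n$ and splits on whether $\sum_i\mathrm{e}_i\in\widehat{\mathcal{M}}(I)$. In the remaining case it uses \eqref{eq: slack decomposition} to get $|E_\alpha|=1$ away from the corners. This forces the graph $\Gamma$ to be a matching, so $I$ is a matching ideal, and \cref{lemma: matching} then gives slack $n(n-p-2)+p>0$.

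\textbf{Your route.} You prove a per-direction dichotomy that generalizes \cref{lemma: two variables}. Either some standard $\mathbf{m}$ maximal in the facet $\{m_j=0\}$ satisfies $\mathbf{m}+(a_j-1)\mathrm{e}_j\in\supp(I)$, which gives a root in $\mathcal{R}_{\mathrm{e}_j^*}(I)$. Or the standard set is a product and $I=I'\KK[\XX]+(x_j^{a_j})$.

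Both branches work as you describe. In the cylinder branch you need only three easy facts:
\begin{enumerate}[(a)]
\item $s(I)\geq s(I')$. The derivation $\XX^{\mathbf{m}'}\partial/\partial x_k$ with $k\neq j$ is $\KK[x_j]$-linear and kills $x_j^{a_j}$. It stays nonzero on $A$ by the product structure, and $\alpha'\mapsto(\alpha',0)$ is injective.
\item $P(I)=P(I')+(a_j-1)\bigl(\ell'-1+P(I')\bigr)\geq P(I')+2$.
\item $s(I')+P(I')\geq n-1$, which is \cref{proposition: cota} applied to $I'$, or your induction hypothesis.
\end{enumerate}
So the strengthening $s(I)\geq s(I')+1$ and the fallback in your last paragraph are unnecessary, and the hedged ``should give'' steps are in fact valid.

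\textbf{Comparison.} Your argument never uses irredundant irreducible decompositions: \cref{proposition: empty}, \cref{lemma: outer} and \cref{lemma: interior} drop out. \cref{lemma: matching} is needed only for the edgeless ideal and for the example. Run as an induction on $n$, with base case $n=2$ where no cylinder direction can exist, it also reproves \cref{proposition: cota}. The paper's route instead reuses the corner bookkeeping from \cref{proposition: cota}. In exchange it gives structural information: any ideal near the bound must be a matching ideal.

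One small omission: for the $n=3$ example you should also note that $I$ is reducible. For instance, $\mathcal{M}(I)=\{(a,1,1),(1,2,2)\}$, as the statement's standing hypothesis requires.
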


\begin{proof}
By \cref{lemma: identity},
\begin{equation}\label{eq: slack}
\dim \Aut^0_\KK(A) - (\ell - 1 + n) = s(I) + P(I) - n.
\end{equation}

\smallskip
\noindent\textit{The case $n = 2$.} Let $x_1^{a_1}, x_2^{a_2} \in G(I)$. By \cref{lemma: two variables}, $s(I) \geq 2$. Now \eqref{eq: slack} gives $\dim \Aut^0_\KK(A) - (\ell - 1 + n) \geq P(I) \geq 0$, with equality only if $P(I) = 0$, i.e., if $\nu(\mathbf{m}) \leq 1$ for every $\mathbf{m} \in \ZZ^2_{\geq 0} \setminus \supp(I)$, i.e., if $x_1 x_2 \in I$. In this case $\ZZ^2_{\geq 0} \setminus \supp(I)$ consists of $0$, $c\,\mathrm{e}_1$ with $1 \leq c < a_1$ and $c\,\mathrm{e}_2$ with $1 \leq c < a_2$, so $I = (x_1^{a_1}, x_1x_2, x_2^{a_2})$. Conversely, these ideals attain the bound by \cref{remark: sharp}. This proves (i) for $n = 2$.

\smallskip
\noindent\textit{The case $n \geq 3$.} Since $\dim \Aut^0_\KK(A) - (\ell - 1 + n)$ is a nonnegative integer by \cref{proposition: cota}, it suffices to show that it cannot vanish. Assume that $\dim \Aut^0_\KK(A) = \ell - 1 + n$, and set $\mathbf{1} = \sum_{i=1}^n \mathrm{e}_i$. We distinguish two cases.

\smallskip
\noindent\textit{Case A: $\mathbf{1} \in \widehat{\mathcal{M}}(I)$.} Then $\mathbf{1} \notin \supp(I)$, so $\mathbf{m} \notin \supp(I)$ for every $\mathbf{m} \in \{0,1\}^n$. The monomials $x_ix_j$ with $i < j$ and $x_ix_jx_k$ with $i < j < k$ contribute $\binom{n}{2} + 2\binom{n}{3} \geq n + 2$ to $P(I)$, so \eqref{eq: slack} gives $\dim \Aut^0_\KK(A) - (\ell - 1 + n) \geq 2$, a contradiction.

\smallskip
\noindent\textit{Case B: $\mathbf{1} \notin \widehat{\mathcal{M}}(I)$.} Since every summand on the right-hand side of \eqref{eq: slack decomposition} is nonnegative, our assumption forces $s = r'$, $\dim \mathfrak{g}_{\alpha_q} = 2$ for every $q$, and $\dim \mathfrak{g}_\alpha = 1$ for every $\alpha \in \Omega$. Since $\mathbf{1} \notin \widehat{\mathcal{M}}(I)$, every interior element of $\widehat{\mathcal{M}}(I)$ would fall in Case~1 of \cref{lemma: interior}, producing $\dim \mathfrak{g}_{\alpha_q} = n \geq 3$. Hence $\widehat{\mathcal{M}}(I)$ has no interior elements, and $|E_\alpha| = 1$ for every $\alpha \in (\ZZ^n_{\geq 0} \setminus \supp(I)) \setminus (\widehat{\mathcal{M}}(I) \cup \{0\})$. In particular, $|E_{\mathrm{e}_i}| \leq 1$ for every $1 \leq i \leq n$.

Let $\Gamma$ be the graph on $\{1, \ldots, n\}$ in which $i \neq j$ are adjacent if and only if $x_ix_j \notin I$. Since $|E_{\mathrm{e}_i}|$ equals the degree of $i$ in $\Gamma$ plus $1$ if $x_i^2 \notin I$, every vertex of $\Gamma$ has degree at most $1$, i.e., $\Gamma$ is a matching, and $x_i^2 \in I$ for every matched vertex $i$. Let $\mathbf{m} \in \ZZ^n_{\geq 0} \setminus \supp(I)$ with $\nu(\mathbf{m}) \geq 2$, and let $i \neq j$ with $m_i, m_j \geq 1$. Then $x_ix_j$ divides $\XX^{\mathbf{m}}$, so $i$ and $j$ are adjacent; if some $k \notin \{i,j\}$ satisfied $m_k \geq 1$, then $i$ would also be adjacent to $k$, which is impossible. If $m_i \geq 2$, then $2\mathrm{e}_i + \mathrm{e}_j \notin \supp(I)$, so $\mathrm{e}_i, \mathrm{e}_j \in E_{\mathrm{e}_i}$, again impossible. Hence $\mathbf{m} = \mathrm{e}_i + \mathrm{e}_j$. Let $p$ be the number of edges of $\Gamma$ and $\sigma = n - 2p$ the number of isolated vertices, and for each isolated vertex $k$ let $x_k^{a_k} \in G(I)$. Consequently, $I$ is the ideal of \cref{lemma: matching} attached to $\Gamma$.

By \cref{lemma: matching}, we have $P(I) = p$ and $s(I) = n(\sigma + p - 1)$, so \eqref{eq: slack} yields
$$
\dim \Aut^0_\KK(A) - (\ell - 1 + n) = n(\sigma + p - 1) + p - n = n(n - p - 2) + p,
$$
where we used $\sigma = n - 2p$. If $n = 3$, then $p \leq 1$ and this quantity equals $3$ or $1$. If $n \geq 4$, then $p \leq n/2$ gives $n - p - 2 \geq 0$, so the quantity is at least $p$, and it equals $n(n-2) \geq 8$ when $p = 0$. In all cases it is positive, contradicting our assumption. This proves (i) for $n \geq 3$ and the inequality in (ii).

\smallskip
Finally, the ideal $I = (x_1^{a}, x_1x_2, x_1x_3, x_2^2, x_3^2)$ is reducible, since $\mathcal{M}(I) = \{(a,1,1), (1,2,2)\}$, and it is the ideal of \cref{lemma: matching} attached to the graph with the single edge $\{2,3\}$ and the isolated vertex $1$, i.e., $n = 3$, $\sigma = 1$, $p = 1$. By \cref{lemma: matching} and \eqref{eq: slack}, we get $\dim \Aut^0_\KK(A) - (\ell - 1 + n) = 1$, while $\ell = 1 + (a - 1) + 3 = a + 3$.
\end{proof}

\bibliographystyle{alpha}
\bibliography{ref}
\end{document}